\documentclass[preprint,11pt]{elsarticle}

\usepackage{amssymb}
\usepackage{amsmath}
\usepackage{amsthm}
\usepackage{bm}
\usepackage[ruled, linesnumbered]{algorithm2e}
\usepackage{multirow}
\usepackage{booktabs}
\usepackage[colorlinks=true,linkcolor=red,citecolor=blue,urlcolor=blue]{hyperref}
\usepackage{fullpage}
\usepackage{float}

\newcommand{\bx}{\bm{x}}
\newcommand{\ba}{\bm{a}}
\newcommand{\bw}{\bm{w}}
\newtheorem{thm}{Theorem}
\newtheorem{df}[thm]{Definition}
\newtheorem{proposition}[thm]{Proposition}
\newtheorem{assumption}[thm]{Assumption}
\newtheorem{lma}[thm]{Lemma}
\newtheorem{corollary}[thm]{Corollary}
\newdefinition{rmk}{Remark}
\journal{Computer Methods in Applied Mechanics and Engineering}

\begin{document}

\begin{frontmatter}
\title{An Adaptive Transferable Neural Network for Partial Differential Equations with Anisotropic or Sharp Solutions}

\author[a]{Jiacong Qu}
\ead{jiacongqu@buaa.edu.cn}
\author[a,b]{Liyong Zhu\corref{cor1}}
\ead{liyongzhu@buaa.edu.cn}

\address[a]{School of Mathematical Sciences, Beihang University, Beijing 100191, China}
\address[b]{LMIB(Beihang University), Ministry of Education, Beijing, 100191, China}
\cortext[cor1]{Corresponding author.}

\begin{abstract}
Transferable neural network (TransNet) has demonstrated great potential for solving partial differential equations (PDEs) with high accuracy and efficiency. However, it struggles significantly when dealing with PDEs featuring anisotropy or sharp solutions. In this work, we propose an adaptive transferable neural network (A-TransNet), a novel machine learning framework that adaptively expands neurons in regions with large solution gradients to effectively capture complex solution behaviors. Instead of relying on training or random sampling, the location parameters of newly added neurons are determined by integrating the active subspaces method and local gradient analysis based on posterior gradient information. This adaptive strategy enables neurons to be allocated where they are most needed, thereby improving the resolution of anisotropic and sharp features without  additional computational costs. Moreover, the shape parameters of new neurons are automatically determined from the neuron density induced by their locations, further enhancing the network representation capability. An iterative neuron expansion strategy is developed, together with rigorous analysis of stability, a posteriori error estimation, and convergence of the proposed A-TransNet method. Extensive numerical experiments on challenging PDE problems with anisotropic and sharp solutions demonstrate that A-TransNet achieves superior accuracy, efficiency, and robustness compared with existing methods.
\end{abstract}

\begin{keyword}
Transferable neural network \sep active subspaces method \sep local gradient method \sep neuron density \sep neuron‑level adaptivity \sep anisotropic or sharp solutions
\end{keyword}
\end{frontmatter}

\section{Introduction}
A large class of practically relevant PDEs arises from anisotropic media, multiscale transport and phase-transition phenomena, whose solutions are inherently anisotropic \cite{Leung2013NonlinearPDE} and often develop sharp layers \cite{Debnath2005NPDE}, boundary layers \cite{Achdou2014PDEMacro}, shocks or steep transition fronts \cite{Gomez2019PhaseReview} that pose severe numerical challenges. Traditional numerical methods, including finite difference methods (FDM) \cite{LeVeque2007FDM}, finite element methods (FEM) \cite{Thomee2007FEM}, finite volume methods (FVM) \cite{Moukalled2016FVM}, and spectral methods, perform satisfactorily for problems with smooth solutions, yet they struggle significantly when dealing with PDEs featuring anisotropy or sharp solutions. Against this background, Adaptive Finite Element Methods (AFEMs) \cite{Kopteva2015AnisoAFEM, Cier2021AdaptiveFEM}, Rotated Staggered-Grid Finite-Difference (RSFD) \cite{Xie2024RotatedFD}, and anisotropic variational mesh adaptation
\cite{Rahmani2025AnisoMesh} have been proposed. While these approaches deliver acceptable performance in their respective applicable ranges, they retain the intrinsic limitations shared by conventional numerical discretization techniques, such as tedious mesh generation \cite{Hughes2005IGA}, the curse of dimensionality \cite{Yang2019WANs}, and rigid architectures hard to adapt to different dimensions and PDE types \cite{Xu2023M3AS}.

In recent years, neural network based approaches for solving partial differential equations (PDEs) — including physics-informed neural networks (PINNs) \cite{Raissi2019PINNs, Lu2021PINNHard} based on the general form of PDEs, the Deep Galerkin Method (DGM) \cite{Sirignano2018DGM}, Weak adversarial networks (WANs) \cite{Yang2019WANs}, Deep Ritz method (DRM) \cite{E2018DRM} based on the variational form of PDEs, as well as neural operators (FNO \cite{Li2020FNO} and DeepONet \cite{Lu2021DeepONet})—have emerged as promising mesh-free alternatives \cite{Brunton2024mlPDE}. These frameworks effectively circumvent the inherent mesh generation bottlenecks of traditional methods and hold significant potential for mitigating the curse of dimensionality. Nevertheless, they suffer from intrinsic issues such as gradient mismatch, gradient explosion, gradient vanishing \cite{Yu2022gPINN}, and poor fitting performance when solving PDEs with anisotropic or sharp solutions \cite{Calabro2021ELMSharp}. Several attempts have been made to address these challenges, such as Locally Adaptive Activation Functions (LAAF) \cite{Jagtap2020LAAF}, Physics-Informed Radial Basis Network (PIRBN)
\cite{Bai2023PIRBN}, adaptive symmetry-recomposition PINN (AsPINN) \cite{Liu2024ASPINN}, and Adaptive Basis-Inspired Deep Neural Network (ABI-DNN) \cite{Li2025ABIDNN}. Despite their remarkable improvements, these approaches still suffer from some limitations: model training requires solving highly nonlinear and nonconvex optimization problems \cite{Kiyani2025Optimizer}, which frequently introduce non-negligible optimization errors and degrade the accuracy of numerical solutions. Moreover, theoretical convergence guarantees for such training procedures are generally absent, hindering their stable application in high-precision scientific and engineering computations \cite{Arzani2023JCP}.

To mitigate the optimization errors of neural network-based PDE solvers, Randomized Neural Networks (RaNNs) \cite{Igelnik1995Stochastic} offer a promising strategy, reducing the originally nonlinear and nonconvex problem to a computationally efficient and accurate least-squares solution. Representative realizations of this family of methods include the Extreme Learning Machine (ELM) \cite{Huang2006ELM, Ren2025PIELM}, the local variant locELM \cite{Dong2021Local}, and the Random Feature Method (RFM) \cite{Chi2024RFMInterface}. Further developments hybridize RaNNs with subdomain-coupling strategies \cite{Li2023LRNN} and weak-form numerical frameworks, such as RaNN Petrov-Galerkin \cite{Shang2023Randomized} and Local RaNN-DG methods \cite{Sun2022LRNNDG}. Recently, Zhang et al. proposed a transferable neural network (TransNet) \cite{Zhang2024TransNet} based on basis function reparameterization, which increases the expressiveness of neural networks by uniformly sampling neurons and exhibits favorable approximation capacity and transferability for solving smooth PDEs. Collectively, these methodological advancements demonstrate that RaNN-based models can substantially alleviate the difficulty of optimization while preserving robust approximation performance for PDE solution tasks \cite{vanBeek2026LocalRFM}.

In spite of these merits, RaNNs struggle to construct informative, task-aligned fixed parameters for problems with anisotropic solutions or sharp solutions featuring steep gradients. Several adaptive network paradigms have been proposed to address this challenge. Dang et al. devised the Adaptive Growing Randomized Neural Network (AG-RaNN) \cite{Dang2024AGRaNN}, which constructively adds neurons based on residuals from preceding steps to dynamically adjust the network size during the iterative solution process, and achieves high‑precision solutions for PDEs with steep‑gradient solutions. Based on the similar residual guidance, Weng et al. developed the Deep Collocation Method (DCM) \cite{Weng2026DCM}, which optimizes sampling points and neural network basis function parameters via the iterative processes and achieves high‑precision solutions for boundary‑layer problems. Recently, Deng et al. further introduced the Adaptive Feature Capture Method (AFCM) \cite{Deng2025AFCM}, which adaptively redistributes feature functions and collocation points toward high-gradient regions to better resolve PDEs with near-singular solutions. Liu et al. put forward subspace neural networks (SNN) \cite{Xu2025SNN, Xu2025SNNweak}, a two-stage paradigm that first trains adaptive neural basis through lightweight optimization and then solves solution coefficients by least squares within the neural subspace, avoiding manually predefined basis functions and provides equation-adaptive feature spaces to deliver superior numerical accuracy for strongly anisotropic diffusion PDEs. Similar to the core idea of pre‑trained neural basis, DeePoly \cite{Liu2025DeePoly} proposed by Liu et al. first employs a DNN (Spotter) to capture complex global solution features and then conducts space optimization on the combined basis space spanned by piecewise polynomial functions (Sniper) and DNN latent features, circumventing exhaustive full‑network non‑convex training while inheriting rigorous high‑order convergence guarantees from polynomial approximation and achieving high‑precision solutions for high‑frequency PDEs. 

Despite the high accuracy demonstrated by the aforementioned methods in solving PDEs with sharp or discontinuous solutions, they still suffer from certain limitations—for instance: AG‑RaNN relies on the Fourier transform of the equation’s right‑hand‑side term for the initial selection of basis‑function parameters; AFCM hinges on domain decomposition and supports only subdomain-wise adaptation, maintaining a fixed number of neurons throughout iterations and thus leading to a limited accuracy upper bound and a lack of theoretical guarantees of convergence and error estimation; DCM, SNN and DeePoly employ optimizers to optimize basis‑function parameters, and this process incurs considerable computational overhead when solving partial differential equations with anisotropic or steep‑gradient solutions.

In this study, we propose Adaptive Transferable Neural Network (A-TransNet), a novel machine learning framework that adaptively expands neurons in regions with large solution gradients to effectively capture complex solution behaviors. Inspired by Adaptive Finite Element Methods (AFEMs) \cite{Kopteva2015AnisoAFEM, Cier2021AdaptiveFEM}, we perform gradient‑aware nonuniform construction on the reparameterized location parameters of neurons, and adaptively generate the corresponding shape parameters for each neuron according to the defined neuron density, thereby realizing neuron‑level adaptive generation of neurons. We further propose a neuron expansion iterative strategy to facilitate neuron‑level adaptivity with theoretical guarantees of convergence and error estimation. This framework achieves great performance for problems with anisotropic or sharp solutions.

\textbf{Main contributions.}
The main contributions of this paper are summarized as follows:
\begin{itemize}
	\item We propose A‑TransNet, which adopts an expansion-based adaptive neuron generation technique to realize neuron‑level adaptivity, delivering an effective and efficient approach for solving PDEs with anisotropic or sharp solutions.
	
	\item We present a gradient-aware construction strategy for nonuniform neuron location parameters, and further introduce a neuron density to achieve automatic nonuniform generation of shape parameters, enabling the network to effectively capture the anisotropic and sharp features of PDE solutions without extra computational costs.
	
	\item We rigorously establish convergence and a posteriori error estimates with a saturation plateau for the A-TransNet numerical solution, which lays a solid mathematical theoretical foundation for the proposed model.
	
	\item We demonstrate numerically that A-TransNet achieves excellent performance in solving representative PDEs with anisotropic or sharp solutions, outperforming state-of-the-art neural network methods in challenging scenarios, including the 3D anisotropic convection-diffusion equation and the 2D nonlinear Allen–Cahn equation with small parameters.
\end{itemize}

The remainder of the paper is organized as follows. Section~\ref{preliminaries} briefly reviews the TransNet method for solving PDEs, and two gradient-driven nonuniform neuron sampling strategies in the function approximation. Section~\ref{A-TransNet} presents the proposed adaptive transferable neural network. Section~\ref{sec:analysis} provides convergence analysis and error estimates. Section~\ref{Sec:NumerExper} presents numerical experiments to validate the effectiveness of the proposed method. Finally, Section~\ref{sec:sum} summarizes our research findings and elaborates on potential directions for future work.

\section{Preliminaries} \label{preliminaries}

In this section, the neural feature space, the transferable neural network (TransNet) \cite{Zhang2024TransNet}, and two gradient-driven nonuniform neuron sampling methods \cite{Pieper2024Nonuniform} are presented, which constitute the fundamental basis of the proposed A-TransNet.

\subsection{Neural Feature Space}

The neural feature space is an important concept in neural networks, referring to the functional space represented by the output of the last hidden layer, which is spanned by a set of neural basis functions (i.e., the neurons of the last hidden layer). For single‑hidden‑layer neural networks, the neural feature space can be expressed as
\begin{align} \label{feature space}
	\mathcal{P}_\text{NN} &= \operatorname{span}\left\{1,\ \sigma\left(\bw_1^\top \bx + b_1\right),\ \dots,\ \sigma\left(\bw_M^\top \bx + b_M\right)\right\},
\end{align}
where $\bx \in \mathbb{R}^d$ is the input to the neural network, $\sigma(\cdot)$ denotes the activation function, and $\bw_m \in \mathbb{R}^d,b_m \in \mathbb{R}$ ($m=1,2,\dots,M$) are the weight vector and bias term of the $m$-th neuron in the hidden layer, respectively.

Determining the neural feature space $\mathcal{P}_\text{NN}$ relies on proper sampling of $\{\boldsymbol{w}_m\}_{m=1}^M$ and $\{b_m\}_{m=1}^M$, which significantly affects the solution accuracy. Several popular existing paradigms are presented below. The Gaussian random feature model \cite{Li2019Towards, Nelsen2021Random} samples parameters based on Gaussian distribution and achieves satisfactory performance for smooth problems. The uniform-distributed feature model \cite{Dang2024AGRaNN} determines the sampling range by performing Fourier transforms on the right-hand side of the governing equation, thereby further improving the accuracy. The optimizable-distribution feature model \cite{Weng2026DCM} dynamically tunes random parameters via the Adam optimizer guided by residual information, yielding acceptable accuracy for complex PDE problems. Furthermore, He et al. investigate the influence of diverse activation functions on neural feature spaces \cite{Yang2026FEN} and propose trigonometric activation functions to enrich neural feature spaces, enabling better approximation of high-frequency solution characteristics in PDEs. Therefore, selecting appropriate parameters for the neural feature space is particularly critical when solving PDEs with anisotropic or sharp solutions.

The output function of the neural network $u_\text{NN}(\bx) \in \mathcal{P}_\text{NN}$ is typically takes the form of a linear combination of neural basis functions
\begin{align} \label{Transnet Solution}
	u_\text{NN}(\bx) &= \sum_{m=1}^M \alpha_m \sigma\left(\bw_m^\top \bx + b_m\right) + \alpha_0,
\end{align}
where $\alpha_0, \alpha_1, \dots, \alpha_M$ denote the parameters of the output layer. 

\subsection{Transferable Neural Network}\label{TransNet}

TransNet, proposed in \cite{Zhang2024TransNet}, is a single-hidden-layer fully connected neural network for solving PDEs of the following form:
\begin{align} \label{PDE}
	\begin{aligned}
		\mathcal{L}(u(\bx)) &= f(\bx), \quad \bx \in \Omega, \\
		\mathcal{B}(u(\bx)) &= g(\bx), \quad \bx \in \partial\Omega,
	\end{aligned}
\end{align}
where $u$ denotes the unknown solution of the PDE, $\Omega \subset \mathbb{R}^d$ is a bounded domain with boundary $\partial\Omega$, $\mathcal{L}(\cdot)$ is a differential operator, $\mathcal{B}(\cdot)$ is the operator defining the boundary conditions, and $f$ and $g$ are the right-hand side terms associated with $\mathcal{L}(\cdot)$ and $\mathcal{B}(\cdot)$, respectively.

In TransNet, the neural basis functions are reparameterized within the neural feature space. In particular, each neural basis function in the neural feature space can be regarded as a partition hyperplane:
\begin{align*}
	\bw_m^\top \bx + b_m = 0.
\end{align*}
In the geometric space, this expression can be rewritten as
\begin{align} \label{rewritten hyperplane}
	\gamma_m \bigl(\ba_m^\top \bx + r_m\bigr) = 0,
\end{align}
where $\gamma_m \in \mathbb{R}^+$, $\ba_m \in \mathbb{R}^d$ with $\|\ba_m\|_2 = 1$, and $r_m \in \mathbb{R}$. Analogous to the point-normal form of hyperplane equations in $\mathbb{R}^d$, the vector $\ba_m$ and the scalar $r_m$ of the partition hyperplane \eqref{rewritten hyperplane}, which respectively dictate the hyperplane orientation and represent the distance from the origin to this hyperplane, are collectively referred to as location parameters. In contrast, the scalar $\gamma_m$ is termed the shape parameter due to its association with the geometric shape characteristics of the neural basis function, which quantifies its transition rate. The visualization of shape parameters with the $\tanh(x)$ activation function is presented in Figure \ref{fig:shape parameters}. Clearly, the relationship between the original parameters $\bw_m, b_m$ and the reparameterized parameters $\gamma_m, \ba_m, r_m$ is given as follows:
\begin{align*}
	\bw_m = \gamma_m \ba_m, \quad b_m = \gamma_m r_m.
\end{align*}
In this way, the solution \eqref{Transnet Solution} can be rewritten as
\begin{align} \label{rewritten solution}
	u_{\rm NN}(\bx) = \sum_{m=1}^M \alpha_m \sigma\bigl(\gamma_m \bigl(\ba_m^\top \bx + r_m\bigr)\bigr) + \alpha_0.
\end{align}

\begin{figure}[!htb]
	\centering
	\vspace{-0.2cm}
	\includegraphics[width=0.98\textwidth]{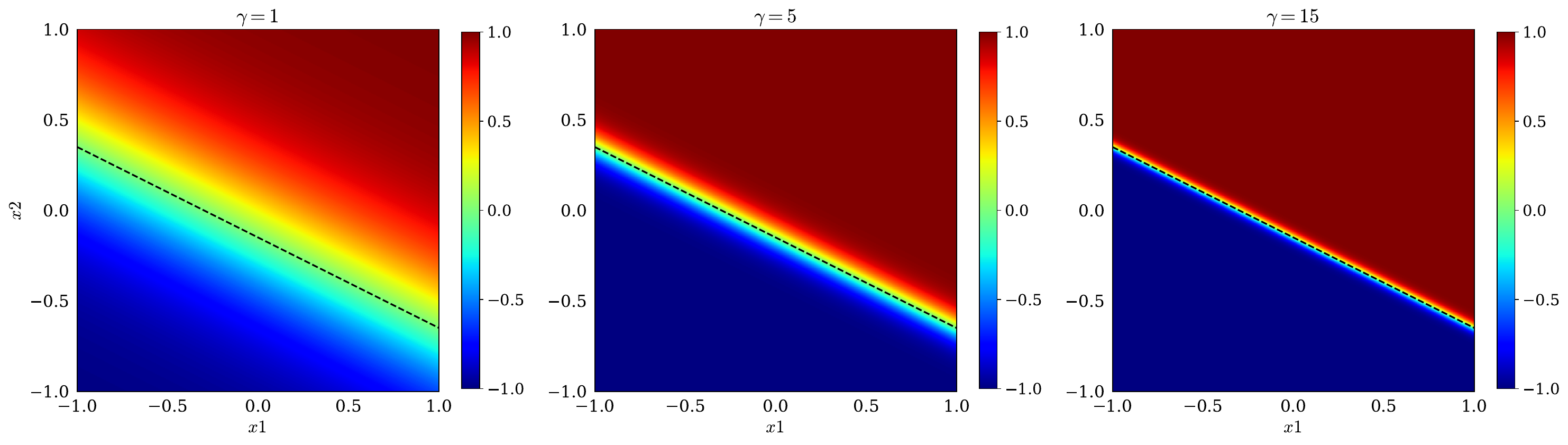}
	\vspace{-0.3cm}
	\caption{Visualization of hyperplanes corresponding to different shape parameter $\gamma$ in two-dimensional space.}
	\label{fig:shape parameters}
	\vspace{-0.25cm}
\end{figure}

The location parameters $\{\ba_m\}_{m=1}^M$ can be sampled from a $d$-dimensional standard Gaussian distribution in the Cartesian coordinate system, and the sampled results are then normalized to unit vectors, while $\{r_m\}_{m=1}^M$ can be generated from a uniform distribution over the interval $[0,1]$. As established in \cite{Zhang2024TransNet}, the neurons generated via this scheme are uniformly distributed in the unit ball $B_1(\mathbf{0})$. For the sake of model simplicity and generalization performance, all neurons in the hidden layers of TransNet share a single shape parameter in \cite{Zhang2024TransNet}, i.e., $\gamma_m = \gamma$ for $m = 1, 2, \dots, M$. Gaussian Random Fields (GRFs) are introduced to generate auxiliary functions for the offline pre-tuning of $\gamma$ based on function approximation capability, and grid search is employed to find the optimal value of $\gamma$ that minimizes the average approximation error of all auxiliary functions. This method achieves excellent accuracy and generalization ability for solving smooth PDE problems, yet it exhibits poor performance for challenging problems with anisotropic or sharp solutions. This limitation essentially stems from the inherent uniform sampling strategy, highlighting the necessity of developing an adaptive parameter selection scheme.

With the location and shape parameters determined, the neural feature space \eqref{feature space} is fully constructed offline, at which point the TransNet solution can be expressed as \eqref{rewritten solution}. The physics-informed loss function is constructed as follows:
\begin{align} \label{loss function}
	\begin{split}
	\hat{Loss}(u) = \hat{Loss}(\boldsymbol{\alpha})
	= \frac{\lambda_I}{J_I} \sum_{i=1}^{J_I} | \mathcal{L}\bigl(u_{\text{NN}}(\bx_i^I)\bigr) - f(\bx_i^I) |^2 + \frac{\lambda_B}{J_B} \sum_{i=1}^{J_B} | \mathcal{B}\bigl(u_{\text{NN}}(\bx_i^B)\bigr) - g(\bx_i^B) |^2,
	\end{split}
\end{align}
where $\lambda_I$ and $\lambda_B$ are predefined penalty parameters, which are set to 1 throughout this paper unless otherwise stated, and $J_I$ and $J_B$ denote the numbers of sampling points in the interior and on the boundary of the computational domain, respectively. Finally, minimizing the loss function \eqref{loss function} yields the output layer parameters $\{\alpha_m\}_{m=0}^M$, which is essentially a linear or nonlinear least squares problem. With these parameters solved, the TransNet solution to the PDE is thereby obtained. The procedure of TransNet is presented in Figure \ref{fig:Transnet flowchart}.

\begin{figure}[!htb]
	\centering
	\vspace{-0.25cm}
	\includegraphics[width=0.94\textwidth]{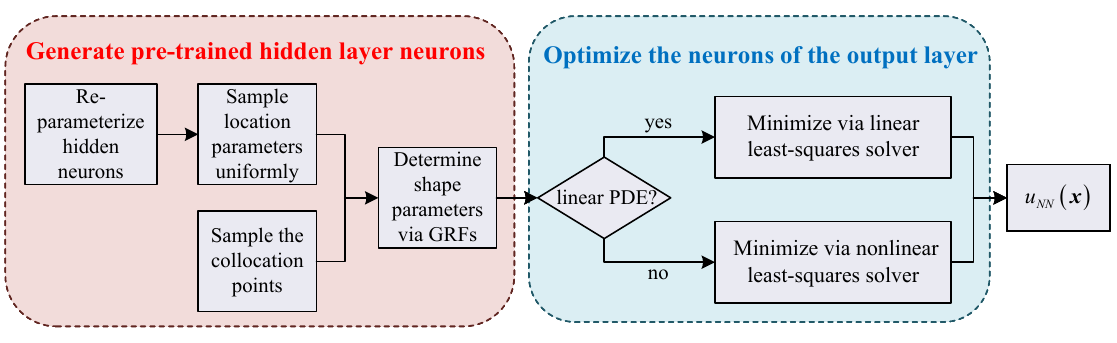}
	\vspace{-0.3cm}
	\caption{Flowchart of the TransNet algorithm.}
	\label{fig:Transnet flowchart}
	\vspace{-0.25cm}
\end{figure}

\subsection{Two Gradient-Driven Nonuniform Neuron Sampling Methods}

RaNNs are also applicable to function approximation, where the approximate function takes the same form as \eqref{Transnet Solution}, with only the loss function adjusted to the following form:
\begin{align*}
	\begin{split}
	 f(\boldsymbol{x}) = \sum_{m=1}^{M} \alpha_m \sigma\bigl(\boldsymbol{\omega}_m^\top \boldsymbol{x} + b_m\bigr) + \alpha_0, \quad L(f) = \frac{1}{K} \sum_{k=1}^{K} \bigl(f(\bx_k) - y_k\bigr)^2
	\end{split}
\end{align*}
where $K$ is the number of sampling points, with $y_k$ being the observed value at $\bx_k$.

Active subspaces \cite{Constantine2014Active, Constantine2015CAS} have been well‑established and widely applied in computational mathematics. In \cite{Pieper2024Nonuniform}, the active subspaces method is adopted for neuron sampling in function approximation, which is capable of capturing the anisotropic characteristics of the target function, as summarized in Algorithm~\ref{active subspace method}.
\vspace{-0.2cm}
\begin{algorithm}[!htb]
	\caption{Active Subspaces Method}
	\label{active subspace method}
	\KwIn{Gradient vectors $\{\bm{g}_j\}_{j=1}^{J_I}$.}
	\KwOut{Neural feature space parameters $\{\boldsymbol{w}_m\}_{m=1}^M$ and $\{b_m\}_{m=1}^M$.}
	
	Perform singular value decomposition on the gradient matrix, yielding $G = [\bm{g}_j] = V \Sigma W$.
	
	Sample new gradients from the active subspaces via $\widetilde{\bm{g}}_m = V \xi_m$, where $\xi_m \sim \mathcal{N}(0, \Sigma)$.
	
	Normalize $\widetilde{\bm{g}}_m$ to obtain the weight vector: $ \boldsymbol{\omega_m} = \widetilde{\bm{g}}_m / \|\widetilde{\bm{g}}_m\|_2$.
	
	Sample the bias $b_m$ uniformly over a bounded interval: $b_m = - \boldsymbol{\omega_m} \cdot \bx_m$, where $\bx_m$ is uniformly sampled in the computational domain $\Omega$.
	
	Return neural feature space parameters $\{\boldsymbol{w}_m\}_{m=1}^M$ and $\{b_m\}_{m=1}^M$.
\end{algorithm}
\vspace{-0.2cm}

To capture the localized sharp features of the function, the local gradient method was also proposed in \cite{Pieper2024Nonuniform}, as illustrated in Algorithm~\ref{local gradient method}.

\vspace{-0.2cm}
\begin{algorithm}[!htb]
	\caption{Local Gradient Method}
	\label{local gradient method}
	\KwIn{Sampling points $\{\bx_j\}_{j=1}^{J_I}$ and corresponding gradient vectors $\{\bm{g}_j\}_{j=1}^{J_I}$.}
	\KwOut{Neural feature space parameters $\{\boldsymbol{w}_m\}_{m=1}^M$ and $\{b_m\}_{m=1}^M$.}
	
	Select collocation points $\{\bx_m\}_{m=1}^{M}$ from sampling points with probabilities proportional to the gradient magnitude $\left\| \bm{g}_j \right\|$, whose associated gradient is $\{\bm{g}_m\}_{m=1}^{M}$.
	
	Set its normalized gradient as the weight vector: $\boldsymbol{\omega}_m = \bm{g}_m / \|\bm{g}_m\|_2$.
	
	Compute the bias such that the hyperplane passes near the selected point: $b_m = -\boldsymbol{\omega}_m \cdot \bm{x}_m$.
	
	Return neural feature space parameters $\{\boldsymbol{w}_m\}_{m=1}^M$ and $\{b_m\}_{m=1}^M$.
\end{algorithm}
\vspace{-0.2cm}

The performance of the active subspaces method is demonstrated in Figures~\ref{fig:active}, consistent with \cite{Pieper2024Nonuniform}. For the target function $f(x_1,x_2)=\sin(5x_1-5\sqrt{2}x_2)$, it exhibits anisotropy along the direction $(1, -\sqrt{2})$, as shown in the left subfigure. The Gaussian random feature method yields relatively uniform sampling directions and only achieves an accuracy of 8.65e-02 with 300 neurons. By contrast, the active subspaces method well captures the anisotropy of the target function and attains an accuracy of 1.47e-03. As for the local gradient method, its results are presented in Figure~\ref{fig:local} as in \cite{Pieper2024Nonuniform}. For the target function $f(x)=e^{-30x^2}$, the local gradient method distributes more neurons over regions with large gradients, reaching an accuracy of 1.68e-03 using merely 100 neurons, which represents an improvement over the accuracy of 6.23e-03 obtained by the uniform sampling method. For detailed theoretical analysis and numerical results, see \cite{Pieper2024Nonuniform}.

\begin{figure}[!htb]
	\centering
	\vspace{-0.2cm}
	\includegraphics[width=0.98\textwidth]{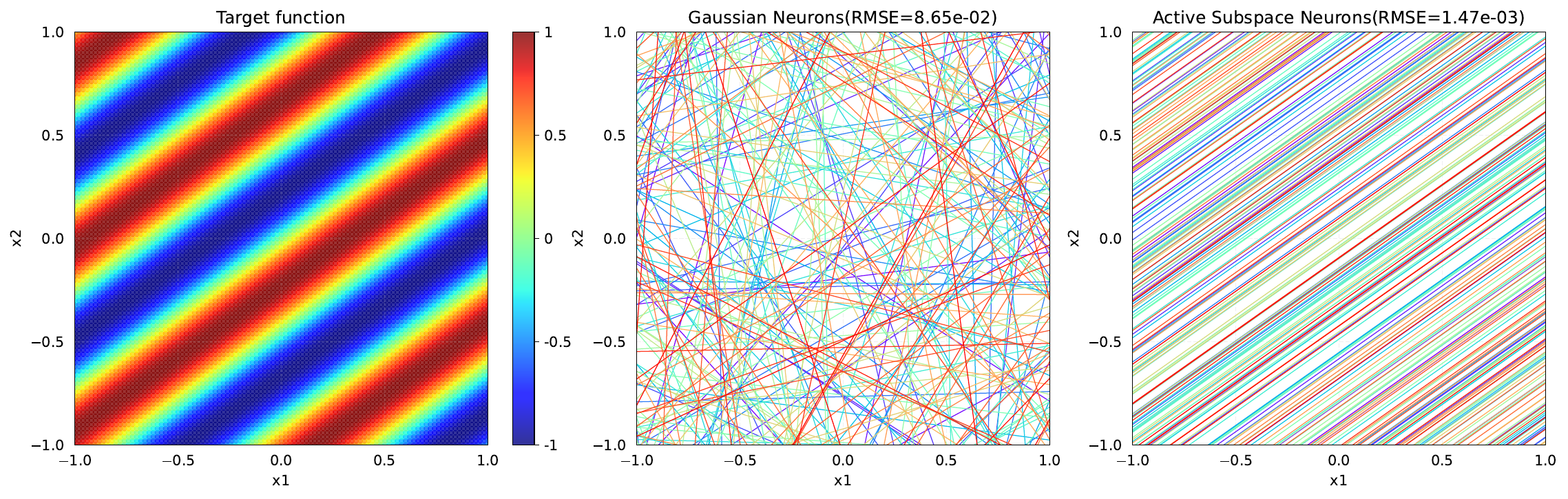}
	\vspace{-0.4cm}
	\caption{Neuron visualization of the active subspaces method. $f(x_1,x_2)=sin(5x_1-5\sqrt{2}x_2)$.}
	\label{fig:active}
\end{figure}

\begin{figure}[!htb]
	\centering
	\vspace{-0.1cm}
	\includegraphics[width=0.68\textwidth]{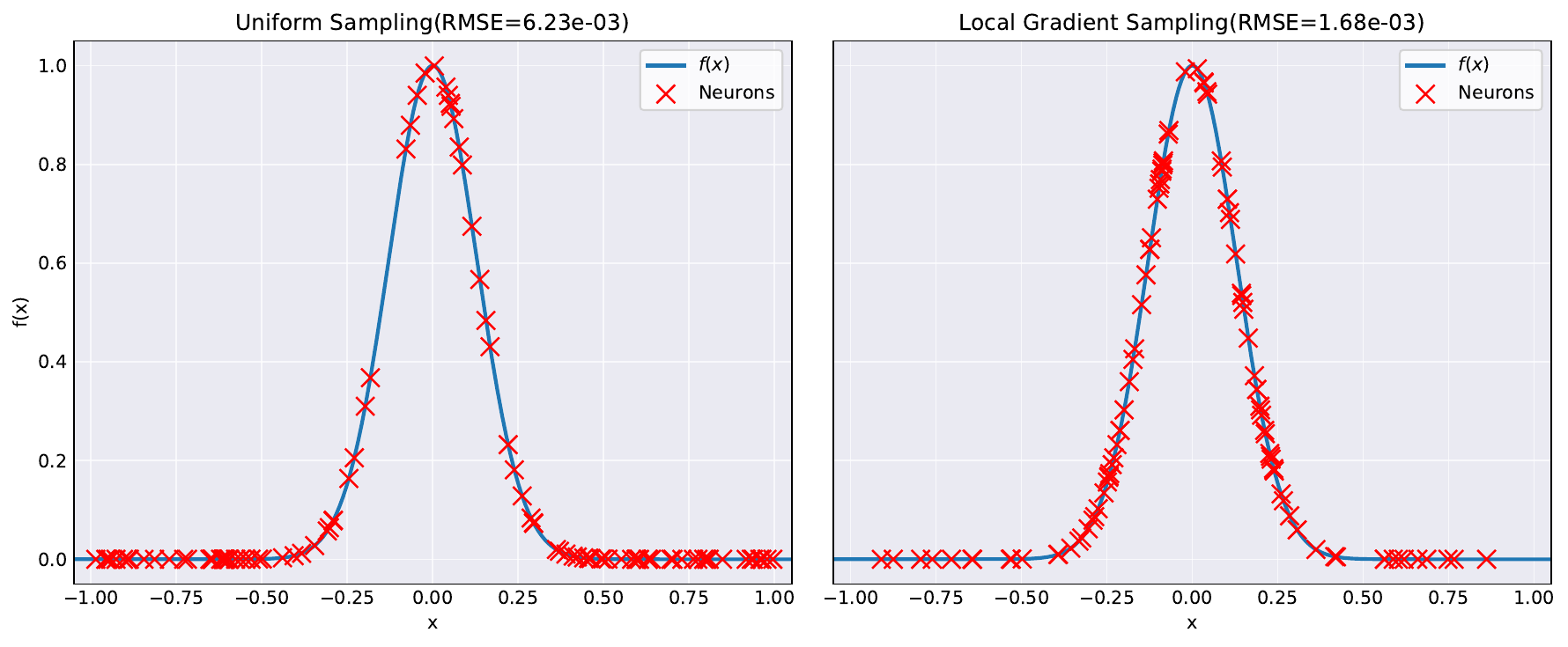}
	\vspace{-0.45cm}
	\caption{Neuron visualization of the local gradient method. $f(x)=e^{-30x^2}$.}
	\label{fig:local}
	\vspace{-0.25cm}
\end{figure}

In brief, the active subspaces method captures anisotropic structures, while the local gradient method focuses on local function behaviors, enabling more targeted function approximation in different scenarios. Their application to numerical PDE solving and combination for problems featuring both anisotropy and sharp solutions will be detailed in Section~\ref{A-TransNet}.

\section{Adaptive Transferable Neural Network} \label{A-TransNet}

In this section, we present the Adaptive Transferable Neural Network (A-TransNet) method for solving PDEs with anisotropic or sharp solutions. The core idea of A-TransNet is to adaptively select network parameters based on the gradient information of approximate solutions, enabling the network to capture the anisotropy or sharp features of the solutions. Let us consider the general partial differential equation in the form of \eqref{PDE}, where the form of solution is given by \eqref{rewritten solution}. The flowchart illustrating the A-TransNet method is presented in Figure \ref{fig:A-TransNet flowchart}.

\begin{figure}[!htb]
	\centering
	\vspace{-0.2cm}
	\includegraphics[width=0.99\textwidth]{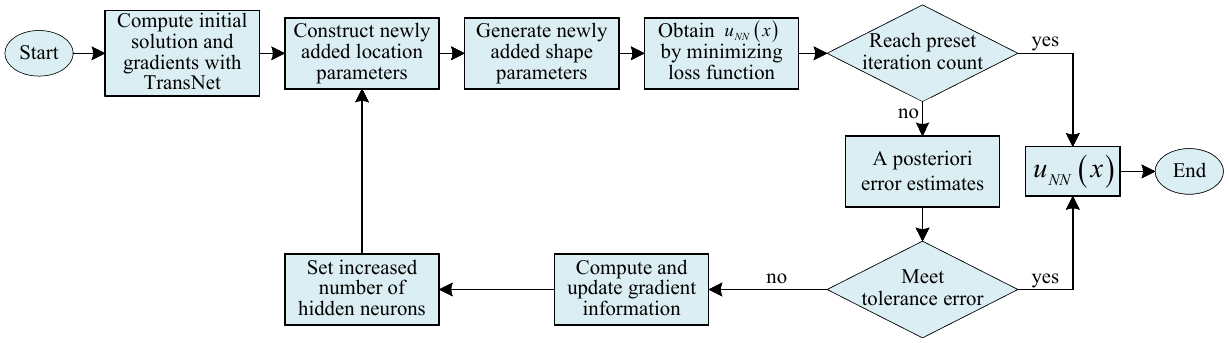}
	\vspace{-0.3cm}
	\caption{Flowchart of the A-TransNet algorithm.}
	\label{fig:A-TransNet flowchart}
	\vspace{-0.2cm}
\end{figure}

First, the initial solution is computed via a small‑scale TransNet to obtain approximate gradient information. Next, the hidden‑layer neurons are determined using such gradient information, including the construction of position parameters and the generation of shape parameters. Then, the outer‑layer coefficients are solved via the least‑squares method. Herein, the a posteriori error is estimated as the difference between the neural network solutions of two successive steps, inspired by Corollary~\ref{a posteriori error estimates of A-TransNet} in subsection~\ref{4.2}. Once the preset iteration count is reached or this difference falls below the preset tolerance, the current iterative numerical solution $u_{\rm NN}(\bx)$ is output and the procedure terminates. Otherwise, compute and update the gradient information based on the current numerical solution, set the increased number of hidden neurons, and proceed to the next iteration.

Next, we elaborate on the gradient-driven location parameters construction strategy, the shape parameters generation strategy, and the iterative strategy for obtaining approximate gradient information in A-TransNet in detail.

\subsection{Nonuniform Construction Strategy for Location Parameters} \label{location parameters}

A-TransNet adopts a hybrid construction strategy for location parameters to better capture the anisotropy and sharp features of the solution. $M_{ani}$ neurons have their location parameters sampled via the active subspaces method to capture solution anisotropy, while $M_{sharp}$ neurons are sampled via the local gradient method to capture solution sharpness. Let $M$ denote the total number of neurons to be constructed, so that $M=M_{ani}+M_{sharp}$. Meanwhile, we define $\mu=M_{ani}/M$ as the allocation parameter.

Consider the unit ball computational domain $B_1(\mathbf{0})$, let $\bm{g}_j$ be an approximation to the gradient of the solution $\nabla u(\bm{x}_j)$, where $\bm{x}_j$ are training points uniformly sampled in the computational domain $\Omega$. Analogous to the active subspaces method, we leverage the properties of Gaussian random variables to conduct singular value decomposition on the gradient matrix:
\begin{align} \label{SVD}
	G = [\bm{g}_j]_{j=1,2,\dots,J_I} = V \Sigma W, \quad \text{with } V \in \mathbb{R}^{d \times r},\ \Sigma \in \mathbb{R}^{r \times r},\ \text{and } W \in \mathbb{R}^{r \times J_I}.
\end{align}
Subsequently, we generate new gradient samples from the active subspaces:
\begin{align} \label{new sample}
	\widetilde{\bm{g}}_m = V \xi_m, \quad \text{where } \xi_m \sim \mathcal{N}(0, \Sigma) \text{ and } m=1,2,\dots,M_{ani}.
\end{align}
Finally, we normalize these samples to obtain $\ba_m$, followed by uniform sampling to generate $r_m$:
\begin{align} \label{get}
	\ba_m = \widetilde{\bm{g}}_m / \|\widetilde{\bm{g}}_m\|_2 \in \mathbb{S}^{d-1}, \quad r_m \sim U[-1, 1].
\end{align}
It should be noted that $r_m$ is selected by sampling from a uniform distribution, which yields better numerical performance compared with the sampling scheme in the original active subspaces method. This is probably because sampling $r_m$ from the uniform distribution after determining the direction $\ba_m$ makes the neurons more uniformly distributed within the computational domain \cite{Zhang2024TransNet}. The above procedure constructs the location parameters for the $M_1$ neurons, which are thereby equipped to effectively capture the anisotropic features inherent in the solution.

\begin{rmk}    
	The above describes the construction method for the location parameters of $M_{ani}$ neurons within the unit ball $B_1(\mathbf{0})$ as the computational domain. When the computational domain $\Omega$ takes other shapes, simple translation and dilation operations on neurons can be implemented to cover the entire computational domain, i.e., $\boldsymbol{\ba}'_m = \frac{\boldsymbol{\ba}_m}{\boldsymbol{s}}$, $r'_m = r_m - \boldsymbol{\ba}_m^T \frac{\boldsymbol{c}}{\boldsymbol{s}}$, where $\boldsymbol{s}$ and $\boldsymbol{c}$ denote the radius and center of $\Omega$, respectively. In particular, if $\Omega$ is centered at the origin, we can always find a ball of radius $R$ enclosing the computational domain such that $\Omega \subset B_R(\mathbf{0})$. In this case, the parameter $r$ follows a uniform distribution $r \sim U(-R, R)$. It is worth noting that $R$ should be chosen slightly larger than the minimal value required to exactly cover $\Omega$. For instance, for a square domain with side length $2$, one can set $R = 1.5$.
\end{rmk}

Following the local gradient method in function approximation, $M_{sharp}$ neurons are sampled as follows to capture the sharp features of the function effectively. We first select the top $M_{sharp}$ training points $\bx_m$ with the largest gradient magnitudes $\|\bm{g}_j\|$ ($\bm{g}_j \approx \nabla u(\bm{x}_j)$), and then the gradients of these candidates are normalized to obtain $\ba_m$:
\begin{align} \label{normalize}
	\ba_m = \bm{g}_m / \|\bm{g}_m\|_2, \quad m = M_{ani} + 1,M_{ani} + 2, \dots, M.
\end{align}
Lastly, $r_m$ is sampled to ensure that the hyperplane \eqref{rewritten hyperplane} determined by $\ba_m$ and $r_m$ passes through the chosen sampling points:
\begin{align} \label{r}
	r_m = -\ba_m \cdot \bm{x}_m, \quad m = M_{ani} + 1, M_{ani} + 2, \dots, M.
\end{align}
In this manner, the hyperplanes defined by the location parameters of the remaining $M_{sharp}$ neurons pass through the  training points with the largest gradient magnitudes, rendering these neurons more sensitive to regions with large gradients and thus enabling them to capture the sharp features of the solution.

To summarize the above procedure for constructing the location parameters, we present it concisely as Algorithm~\ref{location parameters algorithm} below.

\begin{algorithm}[!htb]
	\caption{Location Parameters Construction Algorithm}
	\label{location parameters algorithm}
	\KwIn{Radius of computational domain $R$, total number of neurons $M$, allocation parameter $\mu$, approximate gradient of solution $\{\boldsymbol{g}_i\}_{i=1}^{J_I}$, training points $\{\bx_j\}_{j=1}^{J_I}$.}
	\KwOut{Location parameters $\{\ba_m\}_{m=1}^M$ and $\{r_m\}_{m=1}^M$ of $M$ neurons.}
	
	Divide the total $M$ neurons into two disjoint subsets:  $M_{ani} = \left\lceil \mu M\right\rceil$ neurons to capture anisotropic features and $M_{sharp} = M - M_{ani}$ neurons to capture sharp features.
	
	Perform singular value decomposition (SVD) on the gradient matrix as specified in \eqref{SVD}.
	
	Generate new gradient samples $\widetilde{\bm{g}}_m$ as in \eqref{new sample} from the active subspaces.
	
	Normalize the samples and obtain the location parameters in accordance with \eqref{get}.
	
	Select the top $M_{sharp}$ training points (not previously selected) with the largest gradient magnitudes $\|\bm{g}_m\|$.
	
	Normalize the gradients of selected points as shown in \eqref{normalize} to obtain $\ba_m$.
	
	Sample $r_m$ as \eqref{r} such that the hyperplane defined by $\bm{a}_m$ and $r_m$ passes through the selected sampling points.
	
	Return location parameters $\{\ba_m\}_{m=1}^M$ and $\{r_m\}_{m=1}^M$ of all $M$ neurons.
\end{algorithm}

\begin{rmk}
	\vspace{-0.3cm}    
	The allocation parameter $\mu$ determines the sensitivity of all neurons to the anisotropy and sharp features of the solution. For PDEs with anisotropic solutions, $\mu$ should be moderately increased, whereas for equations featuring sharp solutions, $\mu$ ought to be appropriately reduced. Without prior information on solution characteristics, a medium value of $\mu$ such as $2/3$ is recommended, since the computational accuracy is insensitive to variations of $\mu$ within a reasonable range, as demonstrated in Subsection~\ref{Subsec:5.8}.
\end{rmk}

\subsection{Generation Strategy for Nonuniform Shape Parameters} \label{shape parameters}

\subsubsection{neuron density}

The adaptive characteristics of location parameters imply that adopting adaptive shape parameters for all neurons may yield improved performance of the neural network. Inspired by the specific geometric interpretation of shape parameters and following the principle that larger gradients at neuron positions correspond to larger shape parameters, we design a shape parameter generation method for adaptive location parameters. Accordingly, we introduce the concept of neuron density.

Consider the neuron partition hyperplane $\ba_m^\top \bm{x} + r_m = 0$, where $\boldsymbol{a}_m = (a_{m1}, a_{m2}, \dots, a_{md})$. Perform the following coordinate transformation:
\begin{align} \label{coordinate transformation}
	\begin{cases}
		\theta_{mi} = \arctan\left(\frac{a_{m,i+1}}{a_{m1}}\right) \in (-\frac{\pi}{2}, \frac{\pi}{2}), \\
		l_m = r_m \in [-R, R],
	\end{cases} \quad \text{where } i = 1,2,\dots,d-1.
\end{align}
Then the concept of neuron density can be difined in Definition~\ref{neuron density}.

\begin{df}[Neuron Density] \label{neuron density}
	The neuron density is defined by
	\begin{align*}
		\rho_m = \frac{N_m}{V_m}, \quad m = 1, 2, \dots, M,
	\end{align*}
	where $N_m$ denotes the number of points counted within its neighborhood $\prod_{i=1}^{d-1} \left[\theta_{mi}-\frac{\pi \delta}{2R},\, \theta_{mi} + \frac{\pi \delta}{2R}\right] \times [l_m-\delta,\, l_m +\delta]$, and $V_m$ is the volume of this neighborhood (using such a rectangular neighborhood alleviates scale‑related issues and leads to more accurate computations). 
\end{df}

\begin{rmk}    
	To guarantee the accuracy of neuron density computation, when the neighborhood of the neuron
	$\prod_{i=1}^{d-1} \left[\theta_{mi}-\frac{\pi \delta}{2R},\, \theta_{mi} + \frac{\pi \delta}{2R}\right] \times [l_m-\delta,\, l_m +\delta]$
	extends beyond the transformed computational domain region $\prod_{i=1}^{d-1} \left(-\frac{\pi}{2}, \frac{\pi}{2}\right) \times [-R, R]$,
	\(V_m\) is defined as the volume of the intersection of these two regions. Correspondingly, \(N_m\) denotes the number of points counted within this intersection.
\end{rmk}

In the one-dimensional case, the hyperplane \eqref{rewritten hyperplane} defined by a neuron reduces to a single point, and the density can be obtained approximately by counting the number $N_m$ of points within the neighborhood $[-\frac{r_m}{a_{m1}}-\delta, -\frac{r_m}{a_{m1}}+\delta]$, where $\ba_m = (a_{m1})$:
\begin{align*}
	\rho_m = \frac{N_m}{2\delta}, \quad m = 1, 2, \dots, M.
\end{align*}
Considering the two-dimensional case, the hyperplane determined by each neuron becomes a straight line, and we first perform the following coordinate transformation on it:
\begin{align*}
	\begin{cases}
		\theta_{m1} = \arctan\left(\frac{a_{m2}}{a_{m1}}\right) \in (-\frac{\pi}{2}, \frac{\pi}{2}), \\
		l_m = r_m \in [-R, R],
	\end{cases} \quad \text{where } \boldsymbol{a}_m = (a_{m1}, a_{m2}).
\end{align*}
Through the above coordinate transformation, we represent the hyperplane as a point in a pseudo-polar coordinate system, and the neuron density can be approximately calculated by computing the number of points within the rectangular neighborhood $\left[\theta_{m1} -\frac{\pi\delta}{2R}, \theta_{m1} + \frac{\pi\delta}{2R}\right] \times [l_m-\delta, l_m+\delta]$:
\begin{align*}
	\rho_m = \frac{N_m R}{2 \pi \delta^2}, \quad m = 1, 2, \dots, M.
\end{align*}
By analogy, extending to the $d$-dimensional case, after the coordinate transformation \eqref{coordinate transformation} and counting the number $N_m$ within its neighborhood $\prod_{i=1}^{d-1} \left[\theta_{mi}-\frac{\pi \delta}{2R},\, \theta_{mi} + \frac{\pi \delta}{2R}\right] \times [l_m-\delta,\, l_m +\delta]$, the neuron density can be defined by
\begin{align} \label{density formula}
	\rho_m = \frac{N_m R^{d-1}}{2 \pi^{d-1} \delta^d}, \quad m = 1, 2, \dots, M.
\end{align}

\subsubsection{Generation Strategy}
Through extensive observations and experimental validation, an empirical formula for the shape parameter $\gamma$ is proposed in \cite{Lu2025MTransNet}:
\begin{align} \label{empirical formula}
	\gamma \approx C \frac{M^{1/d}}{R},
\end{align}
where $C$ is an empirical constant, which typically depends on the specific form and parameters of the target partial differential equation (PDE), and $d$ denotes the dimension. Using the above empirical formula, the empirical constant $C$ can be determined via the golden-section search method with a small value of $M$ (on a small scale), thereby predicting an appropriate value of the shape parameter $\gamma$ for a new (typically larger) $M$ with low computational cost.

Assume that the number of neurons required to achieve the neuron density \eqref{density formula} over the entire domain transformed via \eqref{coordinate transformation} is given by
\begin{align*}
	M_m = \rho_m \cdot \text{Vol}(D) = \frac{N_m R^{d-1}}{2 \pi^{d-1} \delta^d} \cdot 2 \pi^{d-1} R = \frac{N_m R^d}{\delta^d},
\end{align*}
where $D = \prod_{i=1}^{d-1} (-\frac{\pi}{2}, \frac{\pi}{2}) \times [-R,R]$ and $\text{Vol}(D)$ denotes the volume of the entire transformed domain $D$. Substituting $M_m$ into \eqref{empirical formula} yields the generation formula:
\begin{align} \label{shape parameters formula}
	\gamma_m \approx C \frac{N_m^{1/d}}{\delta}, \quad i = 1, 2, \dots, M,
\end{align}
where $\delta$ is the preset neighborhood radius, and $N_m$ denotes the number of neurons within the neighborhood. With the above theoretical analysis and discussion, we summarize the corresponding algorithm as Algorithm~\ref{shape parameters algorithm}.
\vspace{-0.2cm}
\begin{algorithm}[!htbp]
	\caption{Shape Parameters Generation Algorithm}
	\label{shape parameters algorithm}
	\KwIn{Location parameters $\{\ba_m\}_{m=1}^M$ and $\{r_m\}_{m=1}^M$, radius of computational domain $R$, neighborhood radius $\delta$, empirical constant $C$.}
	\KwOut{Shape parameters $\{\gamma_m\}_{m=1}^M$ of $M$ neurons.}
	
	Apply the coordinate transformation given in \eqref{coordinate transformation} to each position parameter.
	
	Calculate the number of points $N_m$ in the neighborhood of each neuron under the new coordinate system using methods such as KD-Tree.
	
	Compute the shape parameter $\gamma_m$ for each neuron using the generation formula \eqref{shape parameters formula}.
	
	Return shape parameters $\{\gamma_m\}_{m=1}^M$ of all $M$ neurons.
\end{algorithm}

\begin{rmk}    
	In the generation formula \eqref{shape parameters formula}, the magnitude of the parameter $\delta$ determines the sensitivity of the shape parameter to the position parameters. According to experimental observations, its selection may be jointly determined by the computational domain radius $R$, the total number of hidden-layer neurons $M$, and the characteristics of the partial differential equation. We do not discuss this here, and in the numerical experiments of Section~\ref{Sec:NumerExper}, we directly set it as $\delta = \frac{3R}{M^{1/d}}$ to achieve higher solution accuracy. 
\end{rmk}

Theorem~\ref{neuron density consistency} establishes the consistency between the proposed generation formula and the empirical formula in \cite{Lu2025MTransNet} under uniform neuron distribution, which demonstrates the rationality of the proposed generation formula.

\begin{thm}[Consistency of Neuron Density and Shape Parameters]\label{neuron density consistency}
	Assume that the neuron location parameters $\{\ba_m\}_{m=1}^M$ and $\{r_m\}_{m=1}^M$ are uniformly sampled as in conventional TransNet. For sufficiently large neuron number $M$ and sufficiently small neighborhood scale $\delta$, the expectation of the neuron density satisfies
	\begin{align*}
		\mathbb{E}[\rho_m] = \frac{M}{2\pi^{d-1} R}.
	\end{align*}
	Furthermore, the expectation of the generated shape parameters satisfies
	\begin{align*}
	\mathbb {E}[\gamma_m] \approx  C \frac {M^{1/d}}{R}.
	\end{align*}
\end{thm}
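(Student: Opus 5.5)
The plan is to treat the transformed location parameters $(\theta_{m1},\dots,\theta_{m,d-1},l_m)$ of the $M$ neurons as i.i.d. points in the transformed domain $D=\prod_{i=1}^{d-1}(-\frac{\pi}{2},\frac{\pi}{2})\times[-R,R]$. We then compute $\mathbb{E}[N_m]$ as the expected number of those points falling in the small rectangular neighborhood of the $m$-th point.

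The modeling step needed here, and the one I expect to be the main obstacle, is to justify that under conventional TransNet sampling the image of $(\ba,r)$ under the map \eqref{coordinate transformation} is, to leading order, uniformly distributed on $D$ with density $1/\mathrm{Vol}(D)=1/(2\pi^{d-1}R)$. Here $r\sim U[-R,R]$ by the Remark, and $\ba$ is uniform on $\mathbb{S}^{d-1}$, identified with $\pm\ba$ since $a_{m1}$ fixes the branch of $\arctan$.

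For $d=2$ this is exact: $\theta=\arctan(a_2/a_1)$ is uniform on $(-\pi/2,\pi/2)$ when $\ba$ is uniform on the circle. For $d>2$ the angles $\arctan(a_{i+1}/a_1)$ are not exactly uniform. I would therefore state the result in the sense intended by the theorem: the transformed neurons are \emph{regarded as} uniformly distributed in $D$, consistent with the uniformity statement of \cite{Zhang2024TransNet}. Alternatively, one can observe that only local uniformity of the density near each point is needed as $\delta\to0$, which a continuous density supplies up to an $O(1)$ factor absorbed in $C$. I would first try to make this rigorous for $d=2$ and then present the general case under that assumption.

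Given uniformity, fix neuron $m$ and condition on its position. Each of the other $M-1$ points falls in the neighborhood $U_m$, of volume $V_m=(\pi\delta/R)^{d-1}\cdot2\delta=2\pi^{d-1}\delta^d/R^{d-1}$, with probability $V_m/\mathrm{Vol}(D)$. This ignores boundary truncation, which is negligible as $\delta\to0$ because the boundary layer of $D$ has measure $O(\delta)$; the Remark also replaces $V_m$ by the intersection volume, so the ratio is preserved there. Hence
\[
\mathbb{E}[N_m]=1+(M-1)\frac{V_m}{2\pi^{d-1}R}\approx \frac{M V_m}{2\pi^{d-1}R}
\]
for large $M$, since $M V_m\gg1$ in the regime $\delta=3R/M^{1/d}$. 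Dividing by $V_m$, which is deterministic, gives $\mathbb{E}[\rho_m]=M/(2\pi^{d-1}R)$. Equivalently, $\mathbb{E}[N_m]\approx M\delta^d/R^d$, and then $\mathbb{E}[M_m]=\mathbb{E}[\rho_m]\mathrm{Vol}(D)=M$.

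For the shape parameter, $\gamma_m=C N_m^{1/d}/\delta$, so we need $\mathbb{E}[N_m^{1/d}]\approx(\mathbb{E}N_m)^{1/d}$. I would argue this by concentration. $N_m-1$ is binomial with mean $\lambda\approx M\delta^d/R^d$ and variance at most $\lambda$, so $N_m/\lambda\to1$ in probability when $\lambda\to\infty$, for instance when $\delta$ shrinks more slowly than $M^{-1/d}$. Since $t\mapsto t^{1/d}$ is concave, Jensen gives the upper bound $\mathbb{E}[N_m^{1/d}]\le\lambda^{1/d}$. A second-order delta-method expansion gives the relative error $O(1/\lambda)$. Then
\[
\mathbb{E}[\gamma_m]\approx \frac{C}{\delta}\Bigl(\frac{M\delta^d}{R^d}\Bigr)^{1/d}=C\frac{M^{1/d}}{R},
\]
which matches \eqref{empirical formula}. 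In the fixed-ratio regime $\delta=3R/M^{1/d}$, $\lambda\approx3^d$ is a constant, so the Jensen gap is a bounded multiplicative factor. This is why the statement is only claimed with $\approx$, and I would remark on it explicitly.
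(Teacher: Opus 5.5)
Your route is the same as the paper's: regard $(\theta_{m1},\dots,\theta_{m,d-1},l_m)$ as uniform on $D$ with density $1/(2\pi^{d-1}R)$, get the hit probability $\delta^d/R^d$ and $\mathbb{E}[N_m]=M\delta^d/R^d$, divide by the box volume, and substitute into \eqref{shape parameters formula}. Your self-count, boundary and Jensen/concentration remarks are refinements the paper omits (it simply moves $\mathbb{E}$ inside $N_m^{1/d}$), but both corrections are $O(1/\lambda)$ with $\lambda=M\delta^d/R^d=3^d$ fixed in the regime $\delta=3R/M^{1/d}$, so they stay bounded rather than vanishing as $M$ grows; the relevant condition is $\lambda\gg1$, not $MV_m\gg1$. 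Your caveat for $d>2$ is justified and points to a real weak spot in the paper's argument, since large $M$ and small $\delta$ do nothing to make the transformed density uniform. Precisely, each $\theta_{mi}$ is marginally uniform ($a_{m,i+1}/a_{m1}$ is standard Cauchy), but the vector is not jointly uniform because all ratios share $a_{m1}$. Your fallback of absorbing the resulting position-dependent factor into $C$ cannot rescue the identity for $\rho_m$, in which $C$ does not appear.
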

\begin{proof}
	The proof is provided in \ref{app:proofthm2}.
\end{proof}

As a numerical illustration, we refer to the Poisson equation in Subsection~\ref{Subsec:5.3}, whose true solution is showed in Figure~\ref{fig:5.3showneurons} (left). We set $\delta=0.2$ and calculate the neuron density according to \eqref{density formula}, yielding the density expectation $\mathbb{E}[\rho_i]=430.46$ and the coefficient of variation $\text{CV}=0.11<0.15$, which validates the rationality of the proposed neuron density definition. The shape parameter visualization of the additional 4000 neurons generated during the A-TransNet iteration is presented in Figure~\ref{fig:5.3showneurons} (right). More neurons are concentrated in the regions with rapid solution variations, accompanied by a corresponding increase in shape parameters, which greatly enhances the ability of the neural network to capture sharp changes in the solution.

\begin{figure}[!htb]
	\centering
	\vspace{-0.25cm}
	\includegraphics[width=0.75\textwidth]{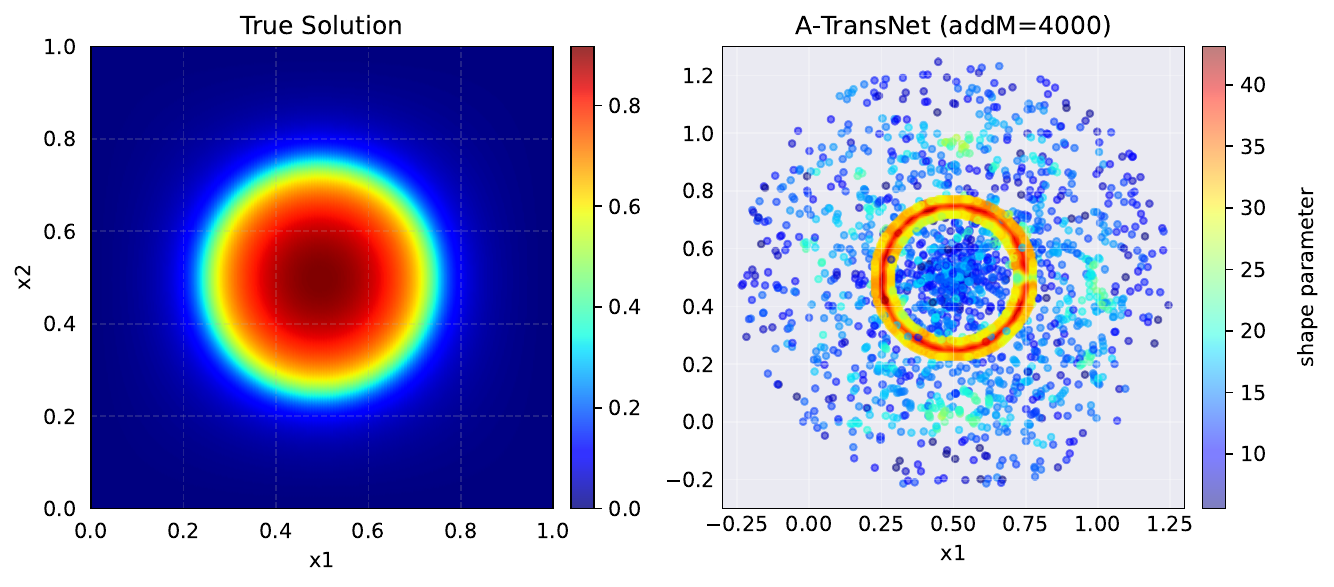}
	\vspace{-0.35cm}
	\caption{Shape parameter visualization for example in Subsection~\ref{Subsec:5.3} (neurons represented by their foot points)}
	\label{fig:5.3showneurons}
	\vspace{-0.2cm}
\end{figure}

\subsection{A-TransNet Algorithm} \label{Sec:layer-grow}

The methods described in Sections \ref{location parameters} and \ref{shape parameters} for determining hidden-layer neurons require approximate derivative information of the solution, which is generally not available in problem \eqref{PDE}. At the $s$-th iteration, approximate derivatives of the exact solution can be obtained from the approximate solution of the previous step, i.e.,
\begin{align} \label{approximate derivatives}
	\nabla_{\boldsymbol{x}} u^{s-1}_{\text{NN}}(\boldsymbol{x}) = \sum_{m=1}^{M_{s-1}} \alpha_m \gamma_m \sigma' \bigl( \gamma_m \bigl( \boldsymbol{a}_m^\top \boldsymbol{x} + r_m \bigr) \bigr) \boldsymbol{a}_m,
\end{align}
where $\sigma'$ denotes the derivative of the activation function $\sigma$ (e.g., if $\sigma = \tanh(x)$, then $\sigma' = \text{sech}^2(x)$). It should be noted that at the initial stage, we directly adopt TransNet to obtain the initial‑solution information. This step pre‑generates approximate gradient information for the solution, which does not need to be sufficiently accurate. Therefore, the number of neurons at the initial step $M_0$ is typically chosen as a small value to reduce the computational cost in practice.

\begin{rmk}    
	In a specific iteration, since the output-layer neuron parameters $\alpha_m$ need to be determined by minimizing the loss function \eqref{loss function}, the neuron derivatives $\gamma_m \sigma' \bigl( \gamma_m \bigl( \boldsymbol{a}_m^\top \boldsymbol{x} + r_m \bigr) \bigr) \boldsymbol{a}_m$ are usually already available, so the approximate derivatives \eqref{approximate derivatives} can be easily obtained with essentially no additional computational cost.
\end{rmk}

We employ an iterative neuron expansion strategy to acquire approximate gradient information to guide neuron construction. The key to the neuron expansion lies in retaining the neurons used in previous iterations, i.e., in the $i$-th iteration, only $M_i - M_{i-1}$ new neurons are resampled using the methods described in Sections \ref{location parameters} and \ref{shape parameters}. It is particularly important to note that when selecting shape parameters, those for the newly added $M_i - M_{i-1}$ neurons should be calculated using the position parameters of all $M_i$ neurons via the generation formula \eqref{shape parameters formula}, rather than only from those of the $M_i - M_{i-1}$ new neurons. The continuous projection onto the neural feature space, obtained iteratively via the neuron expansion method, exhibits monotonicity, which implies that the method possesses certain iterative stability, as will be elaborated on in Section~\ref{4.1}. In summary, the complete algorithm of the proposed A-TransNet is presented in Algorithm \ref{A-TransNet algorithm}.

\vspace{-0.2cm}
\begin{algorithm}[!htbp]
	\caption{The A-TransNet Method}
	\label{A-TransNet algorithm}
	\KwIn{Interior, boundary and test points $\{\bx_j\}_{j=1}^{J_I}$, $\{\bx_j\}_{j=1}^{J_B}$ and $\{\bx_j\}_{j=1}^{J_T}$, number of hidden layer neurons at each iteration $\{M_s\}_{s=0}^{S}$, radius of domain $R$, allocation parameter $\mu$, empirical constant $C$, search neighborhood radius $\delta$, tolerance $\epsilon$.}
	\KwOut{Numerical solution $u_{\rm NN}(\bx)$.}
	
	Compute the initial numerical solution $u_{\mathrm{NN}}^{(0)}$ with an
	$M_0$-scale TransNet, and compute the approximate gradient information
	$\{\boldsymbol{g}_i\}_{i=1}^{J_I}$ of the solution via
	\eqref{approximate derivatives}.
	
	\For{$s = 1:S$}{
		Let $\ell_s=1$ for $s=1$, and $\ell_s=M_{s-1}+1$ for $s>1$.
		
		Call Algorithm \ref{location parameters algorithm} to construct location parameters $\{\boldsymbol{a}_m\}_{m=\ell_s}^{M_s}$ and $\{r_m\}_{m=\ell_s}^{M_s}$ for the newly added neurons, with inputs $R$, $M_s-\ell_s+1$, $\mu$, $\{\boldsymbol{g}_i\}_{i=1}^{J_I}$, and $\{\bx_j\}_{j=1}^{J_I}$.
		
		Call Algorithm~\ref{shape parameters algorithm} to generate
		shape parameters $\{\gamma_m\}_{m=1}^{M_s}$, and use only $\{\gamma_m\}_{m=\ell_s}^{M_s}$ for the newly added neurons, with inputs $\{\boldsymbol{a}_m\}_{m=1}^{M_s}$,
		$\{r_m\}_{m=1}^{M_s}$, $R$, $\delta$, and $C$.
		
		Minimize the loss function \eqref{loss function} via the least squares solver to obtain the output layer neuron parameters $\{\alpha_m\}_{m=0}^{M_s}$.
		
		Compute the iterative solution $u^{(s)}_{\rm NN}(\bx)$ via \eqref{rewritten solution} and update the approximate gradient information $\{\boldsymbol{g}_i\}_{i=1}^{J_I}$ via \eqref{approximate derivatives}.
		
		\textbf{if } $\left\|u_{\rm NN}^{(s)}(\bx) - u_{\rm NN}^{(s-1)}(\bx)\right\| < \epsilon$ or s=S, \textbf{then}
		$u_{\rm NN}(\bx) \leftarrow u_{\rm NN}^{(s)}(\bx)$ and
		\textbf{break} 
	}
	
	Return numerical solution $u_{\rm NN}(\bx)$.
\end{algorithm}

\begin{rmk}
	\vspace{-0.2cm}
	The neuron expansion method requires increasing the number of neurons during iteration, which enlarges the neural feature space of the current iteration compared with the previous one, and further exhibits monotonicity and convergence properties. With consideration of computational efficiency and numerical accuracy, following the settings in \cite{Ananth2021GNN}, we recommend increasing the number of neurons in a power-of-two exponential manner, i.e., $M_i = M_1 \times 2^{i-1}$, which is adopted in the subsequent numerical experiments.
	\vspace{-0.1cm}
\end{rmk}

\section{Numerical Analysis}\label{sec:analysis}

In this section, we conduct the numerical analysis of A-TransNet. Section~\ref{4.1} presents and analyzes the properties of continuous projections, while Section~\ref{4.2} investigates the convergence and a posteriori error estimates for numerical solutions.

\subsection{Analysis for Continuous Projections} \label{4.1}

We first consider the general form \eqref{PDE} of the PDE and define the specific norm accordingly as follows:
\begin{align*}
	\| u \|_{I,B} = \left( \|\lambda_I \mathcal{L} u\|_{L^2(\Omega)}^2 + \|\lambda_B \mathcal{B} u\|_{L^2(\partial\Omega)}^2 \right)^{\frac{1}{2}},
\end{align*}
where $\lambda_I, \lambda_B \ge 0$ are non-negative penalty parameters that balance the weights of the interior linear operator $\mathcal{L}u$ and the boundary linear operator $\mathcal{B}u$ in the norm.

At the $s$-th iterative step, we project the exact solution $u$ onto the subspace $\mathcal{P}_\text{NN}^{(s)}$ spanned by the neural basis functions as
\begin{align} \label{continuous projection minimizer}
	u_s^* = \mathop{\arg\min}\limits_{v \in \mathcal{P}_\text{NN}^{(s)}} \|u - v\|_{I,B},
\end{align}
where $P$ denotes the continuous projection operator. Next, the error is defined as $e_s^* = u - u_s^*$. Due to the properties of the continuous projection and the expansion of the spanned subspace $\mathcal{P}_\text{NN}$ during iterations in the neuron expansion method, we have $\mathcal{P}_\text{NN}^{(s-1)} \subset \mathcal{P}_\text{NN}^{(s)}$. It is clear that the norm of error $\|e_s^*\|_{I,B}$ is non-increasing with iterations, i.e., $\| e_{s+1}^* \|_{I,B} \leq \| e_s^* \|_{I,B}$. Proposition~\ref{monotonicity of iterative error} rigorously establishes the monotonicity of the iterative error for continuous projections.

\begin{proposition}[Monotonicity of Projection Error in Neuron Expansion Method] \label{monotonicity of iterative error}
	Consider the general form of the PDE \eqref{PDE} with a source term $f\in L^2(\Omega)$ and boundary data $g\in L^2(\partial\Omega)$, where the differential operator $\mathcal{L}$ and boundary operator $\mathcal{B}$ are linear. By iteratively expanding the subspace $\mathcal{P}_\text{NN}$ spanned by neural basis functions, the norm of the error is non-increasing, i.e., $\| e_{s+1}^* \|_{I,B} \leq \| e_s^* \|_{I,B}$. In particular, equality holds if and only if $u_{s+1}^* = u_s^*$. 
\end{proposition}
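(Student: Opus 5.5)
The plan is to recast the minimization \eqref{continuous projection minimizer} as an orthogonal projection in a Hilbert space and then use nestedness of the subspaces. Since $\mathcal{L}$ and $\mathcal{B}$ are linear, $\|\cdot\|_{I,B}$ comes from the symmetric bilinear form
\begin{align*}
	\langle u, v\rangle_{I,B} = \lambda_I^2 (\mathcal{L}u,\mathcal{L}v)_{L^2(\Omega)} + \lambda_B^2 (\mathcal{B}u,\mathcal{B}v)_{L^2(\partial\Omega)} .
\end{align*}
This form is well defined on the exact solution, since $\mathcal{L}u=f\in L^2(\Omega)$ and $\mathcal{B}u=g\in L^2(\partial\Omega)$. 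It is also well defined on the smooth neural basis functions, because $\Omega$ is bounded. In general it is only a semi-inner product. I would first treat the case where it is definite on $\mathcal{V}_s=\operatorname{span}\{u\}+\mathcal{P}_\text{NN}^{(s+1)}$, i.e.\ uniqueness for the boundary value problem restricted to this finite-dimensional space. Otherwise I would pass to the quotient by its kernel, where $\|\cdot\|_{I,B}$ is a genuine norm. Every statement below is then understood modulo that kernel.

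\textbf{Step 1 (existence and characterization).} $\mathcal{P}_\text{NN}^{(s)}$ is finite dimensional, so the minimizer $u_s^*$ exists. It is characterized by the Galerkin orthogonality $\langle u-u_s^*, v\rangle_{I,B}=0$ for all $v\in\mathcal{P}_\text{NN}^{(s)}$. To show this, expand $\|u-u_s^*-tv\|_{I,B}^2$ in $t$ and require the derivative at $t=0$ to vanish. Strict convexity in the quotient gives uniqueness of $u_s^*$.

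\textbf{Step 2 (monotonicity).} The neuron expansion retains all old neurons and only adds new ones. Hence $\mathcal{P}_\text{NN}^{(s)}\subset\mathcal{P}_\text{NN}^{(s+1)}$, so $u_s^*$ is admissible in the minimization at step $s+1$. This gives $\|e_{s+1}^*\|_{I,B}\le\|u-u_s^*\|_{I,B}=\|e_s^*\|_{I,B}$.

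\textbf{Step 3 (equality case).} Write $e_s^*=e_{s+1}^*+(u_{s+1}^*-u_s^*)$. The difference $u_{s+1}^*-u_s^*$ lies in $\mathcal{P}_\text{NN}^{(s+1)}$, so by the orthogonality from Step 1 we obtain the Pythagorean identity
\begin{align*}
	\|e_s^*\|_{I,B}^2=\|e_{s+1}^*\|_{I,B}^2+\|u_{s+1}^*-u_s^*\|_{I,B}^2 .
\end{align*}
This recovers the inequality of Step 2. It also shows that equality holds iff $\|u_{s+1}^*-u_s^*\|_{I,B}=0$, which by definiteness means $u_{s+1}^*=u_s^*$. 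The converse direction is trivial.

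The main obstacle is the definiteness of $\|\cdot\|_{I,B}$. It can fail if $\lambda_I$ or $\lambda_B$ vanishes, or if the homogeneous problem $\mathcal{L}w=0$, $\mathcal{B}w=0$ has nontrivial solutions in the network space. I would handle this by adding the standing assumption of well-posedness, namely injectivity of $w\mapsto(\mathcal{L}w,\mathcal{B}w)$ together with $\lambda_I,\lambda_B>0$. Alternatively, I would state the equality case up to the kernel of the seminorm, which is the natural reading of ``$u_{s+1}^*=u_s^*$'' when the minimizer is defined only through the norm.
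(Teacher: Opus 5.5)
Your proof is correct and follows the paper's reasoning. The paper justifies the inequality only through the nestedness $\mathcal{P}_{\text{NN}}^{(s)}\subset\mathcal{P}_{\text{NN}}^{(s+1)}$ (your Step~2) and leaves the equality case unproved, and your Pythagorean identity (the same orthogonality the paper invokes in the proof of Theorem~\ref{empirical error bound}) fills that gap properly. So does your definiteness caveat, since the paper calls $\|\cdot\|_{I,B}$ a norm with $\lambda_I,\lambda_B\ge 0$ although in general it is only a seminorm.

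One minor point on Step~2: Algorithm~\ref{A-TransNet algorithm} as written sets $\ell_1=1$. The initial $M_0$ TransNet neurons are therefore discarded, so the nestedness that both your argument and the paper's rely on is only guaranteed for $s\ge 1$.
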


By the universal approximation theorem \cite{Hornik1991Approx}, for any $\tau_s > 0$ in the $s$-th iteration, there exists a network size $n(\tau_s, u^*_{s-1})$ such that whenever $n \ge n(\tau_s, u^*_{s-1})$, one can find $\psi_s \in V_n^\sigma$ (where $V_n^\sigma$ denotes the function space of neural networks with $n$ neurons and activation $\sigma$) satisfying
\begin{align} \label{assumption}
	\frac{\| e_{s-1}^* - \psi_s \|_{I,B}}{\| e_{s-1}^* \|_{I,B}} \leq \tau_s.
\end{align}
In this case, a posteriori error estimates and convergence analysis are further established in Proposition~\ref{posteriori error estimation and convergence}, which is rigorously proved in \cite{Weng2026DCM}.

\begin{proposition}[A Posteriori Error Estimates and Convergence of Projection] \label{posteriori error estimation and convergence}
	Define the variation of the solution between successive iterative steps as $v_s^* = u_s^* - u_{s-1}^*$. Let $S \ge 1$. At stage $s > 0$, given $0 < \tau_s < 1$, if \eqref{assumption} holds for all $s = 1, \dots, S$, then
	\begin{align} \label{a posteriori error estimation}
		\frac{1}{1+\tau_s}\| v_s^* \|_{I,B} \leq \| u - u_{s-1}^* \|_{I,B} \leq \frac{1}{1-\tau_s}\| v_s^* \|_{I,B}
	\end{align}
	and
	\begin{align} \label{iterative convergence of the neural basis expansion method}
		\| u - u_S^* \|_{I,B} \leq \| u - u_0^* \|_{I,B} \cdot \prod_{s=1}^S \min\{1, 2\tau_s\}.
	\end{align}
\end{proposition}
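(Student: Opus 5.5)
The plan is to treat $\|\cdot\|_{I,B}$ as the (semi)norm induced by the inner product $\langle u,v\rangle_{I,B}=\lambda_I^2(\mathcal{L}u,\mathcal{L}v)_{L^2(\Omega)}+\lambda_B^2(\mathcal{B}u,\mathcal{B}v)_{L^2(\partial\Omega)}$. Linearity of $\mathcal{L}$ and $\mathcal{B}$ makes this bilinear, and $u_s^*$ in \eqref{continuous projection minimizer} is then a best approximation from the finite-dimensional subspace $\mathcal{P}_\text{NN}^{(s)}$. I will use only the best-approximation property and the triangle inequality, so that the degeneracy of $\|\cdot\|_{I,B}$ plays no role. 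I interpret \eqref{assumption} in the sense intended by the neuron expansion method: the neurons added at stage $s$ realize the approximant $\psi_s$, i.e.\ $\psi_s\in\mathcal{P}_\text{NN}^{(s)}$, together with $\mathcal{P}_\text{NN}^{(s-1)}\subset\mathcal{P}_\text{NN}^{(s)}$.

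\textbf{Step 1 (one-step contraction).} Since $u_{s-1}^*\in\mathcal{P}_\text{NN}^{(s-1)}\subset\mathcal{P}_\text{NN}^{(s)}$ and $\psi_s\in\mathcal{P}_\text{NN}^{(s)}$, the function $u_{s-1}^*+\psi_s$ is admissible in the minimization defining $u_s^*$. Therefore
\begin{align*}
\|e_s^*\|_{I,B}\le \|u-u_{s-1}^*-\psi_s\|_{I,B}=\|e_{s-1}^*-\psi_s\|_{I,B}\le \tau_s\|e_{s-1}^*\|_{I,B}.
\end{align*}
Combining this with Proposition~\ref{monotonicity of iterative error} gives $\|e_s^*\|_{I,B}\le \min\{1,\tau_s\}\|e_{s-1}^*\|_{I,B}$.

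\textbf{Step 2 (a posteriori bounds).} Write $v_s^*=u_s^*-u_{s-1}^*=e_{s-1}^*-e_s^*$. For the lower bound in \eqref{a posteriori error estimation}, the triangle inequality and Step 1 give
\begin{align*}
\|v_s^*\|_{I,B}\le\|e_{s-1}^*\|_{I,B}+\|e_s^*\|_{I,B}\le(1+\tau_s)\|e_{s-1}^*\|_{I,B}.
\end{align*}
For the upper bound, $e_{s-1}^*=v_s^*+e_s^*$ yields $\|e_{s-1}^*\|_{I,B}\le\|v_s^*\|_{I,B}+\tau_s\|e_{s-1}^*\|_{I,B}$. Since $\tau_s<1$, rearranging gives $\|e_{s-1}^*\|_{I,B}\le(1-\tau_s)^{-1}\|v_s^*\|_{I,B}$.

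\textbf{Step 3 (convergence).} Iterating Step 1 for $s=1,\dots,S$ gives $\|e_S^*\|_{I,B}\le\|e_0^*\|_{I,B}\prod_{s=1}^S\min\{1,\tau_s\}$. Since $\min\{1,\tau_s\}\le\min\{1,2\tau_s\}$, this implies \eqref{iterative convergence of the neural basis expansion method}.

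The factor $2$ in the statement suggests that the source \cite{Weng2026DCM} argues differently. There, $\psi_s$ is presumably only $\tau_s$-close to $e_{s-1}^*$ as an element of $V_n^\sigma$ that must then be accommodated in the expanded feature space, which costs a triangle inequality and produces the factor $2$. If $\psi_s\in\mathcal{P}_\text{NN}^{(s)}$ cannot be assumed directly, I would instead bound $\|e_s^*\|_{I,B}\le\|e_{s-1}^*-\psi_s\|_{I,B}+\operatorname{dist}(\psi_s,\mathcal{P}_\text{NN}^{(s)}-u_{s-1}^*)$ and require the second term to be at most $\tau_s\|e_{s-1}^*\|_{I,B}$, giving $2\tau_s$.

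\textbf{Main obstacle.} The delicate point is exactly this justification that the stage-$s$ feature space contains, or $\tau_s$-approximates, $u_{s-1}^*+\psi_s$. The remaining steps are elementary triangle-inequality manipulations.
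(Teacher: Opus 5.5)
Your proof is correct under the reading you state, and it differs from the paper mainly in that the paper gives no argument of its own. It defers entirely to \cite{Weng2026DCM}, and the factor $2\tau_s$ in \eqref{iterative convergence of the neural basis expansion method} is inherited from that source.

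Your direct competitor argument tests the minimization with $u_{s-1}^*+\psi_s$. It yields the sharper one-step contraction $\|e_s^*\|_{I,B}\le\tau_s\|e_{s-1}^*\|_{I,B}$, from which the two-sided bound and the product with $\min\{1,2\tau_s\}$ follow a fortiori. You could tighten Step 2 further with orthogonality. Since $v_s^*\in\mathcal{P}_{\text{NN}}^{(s)}$ and $e_s^*$ is $\|\cdot\|_{I,B}$-orthogonal to $\mathcal{P}_{\text{NN}}^{(s)}$, one has $\|e_{s-1}^*\|_{I,B}^2=\|v_s^*\|_{I,B}^2+\|e_s^*\|_{I,B}^2$. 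This gives $\|v_s^*\|_{I,B}\le\|e_{s-1}^*\|_{I,B}\le(1-\tau_s^2)^{-1/2}\|v_s^*\|_{I,B}$, which beats both constants in \eqref{a posteriori error estimation}.

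The hypothesis you make explicit, $u_{s-1}^*+\psi_s\in\mathcal{P}_{\text{NN}}^{(s)}$, is not a weakness of your argument but a necessary repair of the statement. As written, \eqref{assumption} only asks for some $\psi_s\in V_n^\sigma$. Universal approximation supplies such a $\psi_s$ regardless of which neurons A-TransNet actually samples, and those neurons are sampled, not trained to fit $e_{s-1}^*$. If the added neurons do not enlarge the span, then $\|v_s^*\|_{I,B}=0$ while $e_{s-1}^*\neq 0$, and the upper bound in \eqref{a posteriori error estimation} fails. So some link between $\psi_s$ and $\mathcal{P}_{\text{NN}}^{(s)}$ must be assumed.

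Likewise, the nesting $\mathcal{P}_{\text{NN}}^{(0)}\subset\mathcal{P}_{\text{NN}}^{(1)}$ you use in Step 1 is itself an assumption at $s=1$. Algorithm~\ref{A-TransNet algorithm} sets $\ell_1=1$ and regenerates all neurons in the first iteration, so it does not provide this nesting.

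One caution about your fallback. If you only know $\operatorname{dist}(\psi_s,\mathcal{P}_{\text{NN}}^{(s)})\le\tau_s\|e_{s-1}^*\|_{I,B}$, you do get the $2\tau_s$ contraction. However, the upper a posteriori bound degrades to $(1-2\tau_s)^{-1}\|v_s^*\|_{I,B}$, or to $(1-4\tau_s^2)^{-1/2}\|v_s^*\|_{I,B}$ via orthogonality. Both require $\tau_s<1/2$, and neither recovers the constant $(1-\tau_s)^{-1}$ on all of $(0,1)$. So that variant does not deliver the upper bound in \eqref{a posteriori error estimation} as stated.
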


\begin{rmk}    
	Proposition~\ref{monotonicity of iterative error} demonstrates the monotonicity of the continuous projection error in the neural expansion method, which further implies its stability. Proposition~\ref{posteriori error estimation and convergence} further provides the convergence and a posteriori error estimates for continuous projections. Note that this section only discusses properties of continuous projections. Therefore, in Subsection~\ref{4.2}, we further analyze the convergence and error estimates for numerical solutions.
\end{rmk}

\subsection{Analysis for Numerical Solutions} \label{4.2}

In this subsection, we further conduct an error analysis of A-TransNet and thereby establish its convergence properties and a posteriori error estimates for numerical solutions. Let the differential operator $\mathcal{L}$ and the boundary operator $\mathcal{B}$ in \eqref{PDE} be bounded linear operators, and let $\Omega \subset \mathbb{R}^d$ be a bounded Lipschitz domain.

The precise loss function is defined as
\begin{align} \label{continuous loss}
	{Loss}(u) := {\lambda_I}^2 \|\mathcal{L}u - f\|^2_{L^2(\Omega)} +  {\lambda_B}^2 \|\mathcal{B}u - g\|^2_{L^2(\partial\Omega)},
\end{align}
which can be discretized and approximated by $\hat{Loss}(u)$ \eqref{loss function}. Thus, the best approximation of the exact solution $u$ at the $s$-th iterative step in the neural feature space $\mathcal{P}_\text{NN}^{(s)}$ is given by \eqref{continuous projection minimizer}. The numerical solution obtained by minimizing the approximate loss function \eqref{loss function} via the least squares method is
\begin{align*}
	u^{(s)}_\rho := \underset{u \in \mathcal{P}^{(s)}_\text{NN}}{\arg\min} \hat{Loss}(u).
\end{align*}
We state the following two key assumptions on the boundedness of the adaptive parameters and the admissibility condition for the activation functions.

\begin{assumption} [Bounded Adaptive Parameters of A-TransNet] \label{bounded adaptive parameters of A-TransNet}
	All inner parameters $(\bm{a}_m, \gamma_m, r_m)$ of A-TransNet
	are sampled from a fixed compact set
	\begin{align*}
		\Theta = S^{d-1} \times [\gamma_{\min}, \gamma_{\max}] \times [-R, R]
		\subset \mathbb{R}^{d+2},
	\end{align*}
	where $\bm{a}_m \in S^{d-1}$ (unit direction),
	$\gamma_{\min} \le \gamma_m \le \gamma_{\max}$ (bounded shape),
	$|r_m| \le R$ (bounded translation). And the coefficients satisfy
	\begin{align*}
	\sum_{m=1}^{M_s} |c_m| \le C_N,
	\end{align*}
	where $C_N>0$ is a constant independent of the number of collocation points.
\end{assumption}

\begin{assumption}[Admissibility Condition]
	\label{assump:activation_admiss}
	The activation function $\sigma$ satisfies:
	\begin{enumerate}[(1)]
		\item $\sigma \in C^s(\mathbb{R})$, and $\|\sigma^{(s_0)}\|_{L^\infty(\mathbb{R})} < \infty$ for all $s_0 = 0,1,\dots,s$, where $C^s(\mathbb{R})$ denotes the space of functions with continuous derivatives up to order $s$ on $\mathbb{R}$.
		\item $\widehat{\sigma}'(1) \neq 0$ and $\sigma^{(s_0)} \in L^1(\mathbb{R})$ for all $s_0 = 1,\dots,s+1$, where $\widehat{\sigma}$ represents the fourier transform of $\sigma$.
	\end{enumerate}
\end{assumption}

\begin{rmk}    
	Assumption~\ref{bounded adaptive parameters of A-TransNet} is inherently justified by the parameter design of A-TransNet, where the parameter $\bm{a}_m$ is normalized to the unit sphere, and $\gamma_m$ and $r_m$ are naturally bounded by the finite number of hidden layer neurons $M$ and the bounded computational domain $\Omega$. The function class in Assumption~\ref{assump:activation_admiss} covers $tanh$, logistic, and other smooth sigmoid functions.
\end{rmk}

\begin{lma}[Statistical Component Bound]\label{loss gap}
	Suppose Assumptions \ref{bounded adaptive parameters of A-TransNet} and \ref{assump:activation_admiss} hold. Let $\mathrm{Loss}(u)$ and $\hat{\mathrm{Loss}}(u)$ be the continuous weighted loss and discrete empirical loss defined in \eqref{continuous loss} and \eqref{loss function}. Let $C_R>0$ denote a uniform bound such that $|\mathcal{L}u(\bx)|, |f(\bx)| \le C_R$ for all $\bx\in\Omega$, $|\mathcal{B}u(\bx)|, |g(\bx)| \le C_R$ for all $\bx\in\partial\Omega$, and $\lambda_{max} = \max\{\lambda_I,\lambda_B\}$. For fixed $s$, with probability at least $1-\delta_s$, there exists
	\begin{align*}
	\sup_{u\in\mathcal P^{(s)}_{\mathrm{NN}}}\big|{Loss}(u)-\hat{{Loss}}(u)\big| \le
	C_N^2 \lambda_{max}^2 C_R^2 M_s \left( \sqrt{\frac{\log(6M_s/\delta_s)}{J_I}} + \sqrt{\frac{\log(6M_s/\delta_s)}{J_B}} \right) = \hat{\varepsilon}_s.
	\end{align*}
\end{lma}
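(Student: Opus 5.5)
We prove the bound by writing the loss as a quadratic form in the coefficients and applying Hoeffding's inequality entrywise, with a union bound over the Gram entries. Fix $s$ and write any $u\in\mathcal P^{(s)}_{\mathrm{NN}}$ as $u=\sum_{m=0}^{M_s} c_m\phi_m$ with $\phi_m(\bx)=\sigma(\gamma_m(\ba_m^\top\bx+r_m))$ and $\phi_0=1$. By Assumption~\ref{bounded adaptive parameters of A-TransNet}, $\sum_m|c_m|\le C_N$. Since $\mathcal L,\mathcal B$ are linear, the interior residual is
\begin{align*}
\mathcal L u - f=\sum_m c_m\,\mathcal L\phi_m - f .
\end{align*}
Expanding the square gives a quadratic form in $(c_m)$ with random ``matrix'' entries $\mathcal L\phi_m\,\mathcal L\phi_n$, cross terms $\mathcal L\phi_m f$, and the constant $f^2$; the boundary term expands the same way.

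The loss gap is then
\begin{align*}
\mathrm{Loss}(u)-\hat{\mathrm{Loss}}(u)=\lambda_I^2\Big(\sum_{m,n}c_mc_n\Delta^I_{mn}-2\sum_m c_m\Delta^I_{m}+\Delta^I_{0}\Big)+(\text{boundary analogue}),
\end{align*}
where each $\Delta$ is the difference between the integral over $\Omega$ and the empirical mean over the $J_I$ i.i.d.\ uniform collocation points. (The normalisation of the empirical loss to match the continuous one is a constant factor $|\Omega|$, which we absorb.) Taking suprema and using $\sum|c_m|\le C_N$ yields
\begin{align*}
\sup_u|\mathrm{Loss}-\hat{\mathrm{Loss}}|\le \lambda_{max}^2 C_N^2\Big(\max_{m,n}|\Delta^I_{mn}|+\max_m|\Delta^I_m|+|\Delta^I_0|\Big)+\text{(boundary)}.
\end{align*}
We assume $C_N\ge1$ so the linear and constant terms are dominated by $C_N^2$. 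The problem thus reduces to uniform control of finitely many deviations that do not depend on $u$.

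Each integrand is bounded by $C_R^2$. Here we interpret the constant $C_R$ as also bounding $|\mathcal L\phi_m|$ and $|\mathcal B\phi_m|$, which is legitimate by Assumption~\ref{assump:activation_admiss} (bounded derivatives of $\sigma$), $\gamma_m\le\gamma_{\max}$, and boundedness of the coefficients of the operators. Hoeffding's inequality then gives, for each fixed entry,
\begin{align*}
|\Delta|\le C_R^2\sqrt{2\log(2/\eta)/J_I}
\end{align*}
with probability $1-\eta$. The one point needing care is that the features are random but drawn independently of the collocation points, so we condition on them.

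The union bound is the main obstacle. A naive count uses $O(M_s^2)$ pairs $(m,n)$, which would give $\log(M_s^2/\delta_s)$. To match the stated form $M_s\sqrt{\log(6M_s/\delta_s)/J}$, we would instead bound the quadratic part via Cauchy–Schwarz row-wise: control the $M_s$ row sums $\sum_n c_n\mathcal L\phi_n$ at the cost of a factor $M_s$. Alternatively, we could accept $\log(M_s^2)\le 2\log M_s$ and absorb the $2$ into the constant $M_s$ prefactor. Allocating failure probability $\delta_s/6$ to each of the six families (three interior, three boundary) explains the $6$ inside the logarithm. Summing the interior and boundary contributions then gives $\hat\varepsilon_s$ with probability at least $1-\delta_s$.
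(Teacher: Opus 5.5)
The paper gives no self-contained proof of this lemma. It cites Theorem 4.14 of \cite{Dang2024AGRaNN} for $\lambda_I=\lambda_B=1$ and asserts that the weights only rescale the prefactor. Your quadratic-form/Hoeffding/union-bound argument is therefore a genuinely different, explicit route. As written, however, it rests on a step that is false for A-TransNet.

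\textbf{The independence step.} You assert that the stage-$s$ features are drawn independently of the collocation points, and you condition on them. They are not independent:
\begin{itemize}
\item the new location parameters are built from $\bm{g}_j=\nabla u^{(s-1)}_{\mathrm{NN}}(\bx_j)$ at the same interior points that enter $\hat{\mathrm{Loss}}$;
\item $u^{(s-1)}_{\mathrm{NN}}$ was itself fitted on those points;
\item the sharp neurons satisfy $r_m=-\ba_m\cdot\bx_m$ with $\bx_m$ a collocation point;
\item the $\gamma_m$ depend on all of this through $N_m$.
\end{itemize}
Conditionally on the features, the points are no longer i.i.d.\ uniform, so neither the entrywise Hoeffding bounds nor the finite union bound over the realized entries is justified. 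You need one of two fixes.
\begin{itemize}
\item Add an explicit hypothesis that the least-squares collocation points are drawn afresh, independently of everything used to build $\mathcal P^{(s)}_{\mathrm{NN}}$. This is the setting in which your argument, and the stated form of $\hat\varepsilon_s$ with $\log(6M_s/\delta_s)$, actually apply.
\item Prove a bound uniform over all parameter pairs in the compact set $\Theta$ of Assumption~\ref{bounded adaptive parameters of A-TransNet}, by a covering argument using the derivative bounds of Assumption~\ref{assump:activation_admiss}. This is insensitive to how the features were chosen, but the logarithmic factor then depends on $d$, $\gamma_{\max}$ and $R$ rather than on $M_s$.
\end{itemize}
The paper's appeal to \cite{Dang2024AGRaNN} does not address this either.

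\textbf{The normalization.} It cannot be ``absorbed.'' $\hat{\mathrm{Loss}}$ carries $\lambda_I/J_I$ and averages against the uniform probability measure, while $\mathrm{Loss}$ carries $\lambda_I^2$ and Lebesgue measure. Hence $\mathbb{E}\,\hat{\mathrm{Loss}}(u)-\mathrm{Loss}(u)=(\lambda_I/|\Omega|-\lambda_I^2)\|\mathcal Lu-f\|^2_{L^2(\Omega)}+(\lambda_B/|\partial\Omega|-\lambda_B^2)\|\mathcal Bu-g\|^2_{L^2(\partial\Omega)}$. This is a bias that does not vanish as $J_I,J_B\to\infty$. Your common factor $\lambda_I^2$ in front of every $\Delta$ is legitimate only after redefining the empirical weights as $\lambda_I^2|\Omega|/J_I$ and $\lambda_B^2|\partial\Omega|/J_B$, which puts $|\Omega|$ and $|\partial\Omega|$ into the constant.

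\textbf{The union bound.} Drop the row-wise Cauchy--Schwarz idea: the row sums $\sum_n c_n\Delta^I_{mn}$ depend on $c$, so controlling them uniformly still requires every entry. You do not need it anyway.
\begin{itemize}
\item $\ell^1$--$\ell^\infty$ duality gives $|\sum_{m,n}c_mc_n\Delta_{mn}|\le C_N^2\max_{m,n}|\Delta_{mn}|$, with no factor $M_s$.
\item $\log(12(M_s+1)^2/\delta_s)\le 2\log(6M_s/\delta_s)$ for $M_s\ge 2$.
\item Restoring the factor $2$ on the linear term, which you dropped, your bound is an absolute constant times $C_N^2\lambda_{max}^2C_R^2\bigl(\sqrt{\log(6M_s/\delta_s)/J_I}+\sqrt{\log(6M_s/\delta_s)/J_B}\bigr)$.
\end{itemize}
This lies below $\hat\varepsilon_s$ once $M_s$ exceeds that constant. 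The prefactor $M_s$ in $\hat\varepsilon_s$ is what an operator-norm, matrix-concentration bound on the Gram deviation yields, via $\|c\|_2\le\|c\|_1$ and $\|\Phi(\bx)\|_2^2\lesssim M_sC_R^2$. Against the $\ell^1$ coefficient constraint, your entrywise route is the natural one and is sharper by a factor of order $M_s$.

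With the independence hypothesis made explicit and the normalization corrected, your argument is a valid self-contained replacement for the paper's citation.
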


The original theorem concerning the unit-weight residual loss ($\lambda_I=\lambda_B=1$) is rigorously stated and proved in Theorem 4.14 of \cite{Dang2024AGRaNN}. For our weighted loss with penalty parameters $\lambda_I,\lambda_B>0$, the asymptotic scaling with respect to the number of collocation points remains unchanged, and only the constant prefactor is scaled by $\lambda_I^2$ and $\lambda_B^2$. The empirical error bound is given in Theorem~\ref{empirical error bound}.

\begin{thm} [Empirical Error Bound]\label{empirical error bound}
	Under the conditions of Lemma \ref{loss gap}, let $\varepsilon_s =\sqrt{2\hat{\varepsilon}_s}$. Then
	\begin{align*}
	\|u_\rho^{(s)}-u_s^*\|_{I,B} \le \varepsilon_s.
	\end{align*}
\end{thm}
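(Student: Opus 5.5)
The plan is to exploit the quadratic structure of the continuous loss on the linear space $\mathcal{P}^{(s)}_{\mathrm{NN}}$. This gives a Pythagorean identity for $u_s^*$, and Lemma~\ref{loss gap} then transfers the discrete optimality of $u^{(s)}_\rho$ to the continuous loss.

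\textbf{Step 1: identify the loss with the error norm.} Since $\mathcal{L}$ and $\mathcal{B}$ are linear and the exact solution $u$ satisfies $\mathcal{L}u=f$ and $\mathcal{B}u=g$, every $v$ satisfies
\begin{align*}
\mathrm{Loss}(v)=\lambda_I^2\|\mathcal{L}(v-u)\|^2_{L^2(\Omega)}+\lambda_B^2\|\mathcal{B}(v-u)\|^2_{L^2(\partial\Omega)}=\|u-v\|_{I,B}^2 .
\end{align*}
Hence $u_s^*$ in \eqref{continuous projection minimizer} is exactly the minimizer of $\mathrm{Loss}$ over $\mathcal{P}^{(s)}_{\mathrm{NN}}$. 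Moreover, $\|\cdot\|_{I,B}$ comes from the semi-inner product
\begin{align*}
\langle w,z\rangle_{I,B}=\lambda_I^2(\mathcal{L}w,\mathcal{L}z)_{L^2(\Omega)}+\lambda_B^2(\mathcal{B}w,\mathcal{B}z)_{L^2(\partial\Omega)} .
\end{align*}

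\textbf{Step 2: Pythagorean identity.} The first-order optimality of $u_s^*$ over the linear space gives $\langle u-u_s^*,\,w\rangle_{I,B}=0$ for all $w\in\mathcal{P}^{(s)}_{\mathrm{NN}}$. Therefore, for every $v\in\mathcal{P}^{(s)}_{\mathrm{NN}}$,
\begin{align*}
\mathrm{Loss}(v)=\mathrm{Loss}(u_s^*)+\|v-u_s^*\|_{I,B}^2 .
\end{align*}

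\textbf{Step 3: three-term split.} Taking $v=u^{(s)}_\rho$ in Step 2,
\begin{align*}
\|u^{(s)}_\rho-u_s^*\|_{I,B}^2
&=\bigl[\mathrm{Loss}(u^{(s)}_\rho)-\hat{Loss}(u^{(s)}_\rho)\bigr]
+\bigl[\hat{Loss}(u^{(s)}_\rho)-\hat{Loss}(u_s^*)\bigr]
+\bigl[\hat{Loss}(u_s^*)-\mathrm{Loss}(u_s^*)\bigr].
\end{align*}
The middle bracket is $\le 0$, because $u^{(s)}_\rho$ minimizes $\hat{Loss}$ over $\mathcal{P}^{(s)}_{\mathrm{NN}}$ and $u_s^*$ lies in that space. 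On the event of probability at least $1-\delta_s$ given by Lemma~\ref{loss gap}, each outer bracket is at most $\hat{\varepsilon}_s$. Hence $\|u^{(s)}_\rho-u_s^*\|^2_{I,B}\le 2\hat{\varepsilon}_s$, and taking square roots yields $\varepsilon_s=\sqrt{2\hat{\varepsilon}_s}$.

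\textbf{Main obstacle.} The delicate point is applying Lemma~\ref{loss gap} to the two specific functions $u^{(s)}_\rho$ and $u_s^*$. The uniform bound there really holds over the coefficient-bounded class $\sum_m|c_m|\le C_N$ with the uniform bound $C_R$, not over the whole linear span. I would therefore invoke Assumption~\ref{bounded adaptive parameters of A-TransNet} to guarantee that both functions lie in this class. If needed, I would restate the argument with $\mathcal{P}^{(s)}_{\mathrm{NN}}$ replaced by this constrained set. The orthogonality in Step 2 then becomes a variational inequality, $\langle u-u_s^*,\,v-u_s^*\rangle_{I,B}\le 0$, because the constrained set is convex. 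This inequality gives $\mathrm{Loss}(v)\ge \mathrm{Loss}(u_s^*)+\|v-u_s^*\|_{I,B}^2$, which is exactly the direction used in Step 3. A secondary point is that the bound is probabilistic: the conclusion holds on the event of probability at least $1-\delta_s$ inherited from Lemma~\ref{loss gap}.
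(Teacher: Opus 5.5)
Your proof is correct and follows the paper's argument: the paper writes $\|u_\rho^{(s)}-u\|_{I,B}^2$ as a four-term telescoping sum, bounds it by $\|u_s^*-u\|_{I,B}^2+2\hat{\varepsilon}_s$ using empirical minimality and Lemma~\ref{loss gap}, and then cancels $\|u_s^*-u\|_{I,B}^2$ via the Pythagorean identity, which is your Steps 1--3 with the Pythagorean step applied first. Your caveat that Lemma~\ref{loss gap} can only hold uniformly over the coefficient-bounded set, not the full linear span, is a sound refinement the paper glosses over, and your convex-projection variant with $\langle u-u_s^*, v-u_s^*\rangle_{I,B}\le 0$ gives the inequality in exactly the direction the argument needs.
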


\begin{proof}
	Let $e_s = u_\rho^{(s)} - u_s^*$, so that $e_s\in \mathcal{P}^{(s)}_\text{NN}$. By empirical minimality, $\hat{Loss}(u_\rho^{(s)}) \le \hat{Loss}(u_s^*)$. We perform the four-term decomposition:
	\begin{align*}
	\|u_\rho^{(s)} - u\|_{I,B}^2
	=& \Big(\|u_\rho^{(s)} - u\|_{I,B}^2 - \hat{Loss}(u_\rho^{(s)})\Big)
	+ \Big(\hat{Loss}(u_\rho^{(s)}) - \hat{Loss}(u_s^*)\Big) \\
	+& \Big(\hat{Loss}(u_s^*) - \|u_s^* - u\|_{I,B}^2\Big)
	+ \|u_s^* - u\|_{I,B}^2.
	\end{align*}
	Since $\hat{Loss}(u_\rho^{(s)}) - \hat{Loss}(u_s^*) \le 0$, ${Loss}(u_\rho^{(s)}) = \|u_\rho^{(s)} - u\|_{I,B}^2$ and $ {Loss}(u_s^*) = \|u_s^* - u\|_{I,B}^2$, we obtain
	\begin{align*}
	\|u_\rho^{(s)} - u\|_{I,B}^2 \le \|u_s^* - u\|_{I,B}^2 + 2 \sup_{u\in\mathcal P^{(s)}_{\mathrm{NN}}}\big|\mathrm{Loss}(u)-\hat{\mathrm{Loss}}(u)\big|.
	\end{align*}
	Recall that $u_s^*$ is the orthogonal projection of $u$ onto $\mathcal{P}^{(s)}_\text{NN}$ with respect to the inner product induced by $\|\cdot\|_{I,B}$. Since $e_s\in \mathcal{P}^{(s)}_\text{NN}$, the orthogonality property yields
	\begin{align*}
	\|u_\rho^{(s)} - u\|_{I,B}^2
	= \big\|(u_s^* - u) + e_s\big\|_{I,B}^2
	= \|u_s^* - u\|_{I,B}^2 + \|e_s\|_{I,B}^2.
	\end{align*}
	Canceling $\|u_s^* - u\|_{I,B}^2$ from both sides, we apply Lemma~\ref{loss gap} to obtain
	\begin{align*}
	\|e_s\|_{I,B}^2 \le 2\hat{\varepsilon}_s.
	\end{align*}
	Taking the square root of both sides gives
	\begin{align*}
	\|u_\rho^{(s)} - u_s^*\|_{I,B} \le \sqrt{2\hat{\varepsilon}_s} = \varepsilon_s.
	\end{align*}
	This completes the proof.
\end{proof}

From Proposition~\ref{posteriori error estimation and convergence} and Theorem~\ref{empirical error bound}, the convergence and a posteriori error estimates of A-TransNet are obtained and presented in Corollary~\ref{convergence of A-TransNet} and Corollary~\ref{a posteriori error estimates of A-TransNet}, respectively.

\begin{corollary}[Convergence of A-TransNet] \label{convergence of A-TransNet}
	Assume that the assumptions of Proposition \ref{posteriori error estimation and convergence} and Lemma \ref{loss gap} hold. Let $\varepsilon = \varepsilon_0\prod_{s=1}^S \min\{1,2\tau_s\} + \varepsilon_S$. Then the total error of empirical minimizer $u_\rho^{(S)}$ obeys
	\begin{align*}
	\|u - u_\rho^{(S)}\|_{I,B} \le \|u - u_\rho^{(0)}\|_{I,B}\prod_{s=1}^S \min\{1,2\tau_s\} + \varepsilon.
	\end{align*}
\end{corollary}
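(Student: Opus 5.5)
The plan is a triangle-inequality argument that splits the total error into a projection part and a discretization part. I would bound each part by an earlier result: Proposition~\ref{posteriori error estimation and convergence} for the projection error, and Theorem~\ref{empirical error bound} for the distance between the empirical minimizer and the continuous projection.

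\textbf{Step 1: split the error.} For the final step $S$, write
\begin{align*}
\|u-u_\rho^{(S)}\|_{I,B} \le \|u-u_S^*\|_{I,B} + \|u_S^*-u_\rho^{(S)}\|_{I,B}.
\end{align*}
Theorem~\ref{empirical error bound} bounds the second term directly by $\varepsilon_S$.

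\textbf{Step 2: bound the projection error.} For the first term, \eqref{iterative convergence of the neural basis expansion method} gives
\begin{align*}
\|u-u_S^*\|_{I,B}\le \|u-u_0^*\|_{I,B}\prod_{s=1}^S\min\{1,2\tau_s\}.
\end{align*}

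\textbf{Step 3: return to the initial numerical solution.} The statement is phrased in terms of $u_\rho^{(0)}$, not $u_0^*$, so the remaining task is to replace $\|u-u_0^*\|_{I,B}$. I would first try the triangle inequality combined with Theorem~\ref{empirical error bound} at $s=0$:
\begin{align*}
\|u-u_0^*\|_{I,B}\le \|u-u_\rho^{(0)}\|_{I,B}+\|u_\rho^{(0)}-u_0^*\|_{I,B}\le \|u-u_\rho^{(0)}\|_{I,B}+\varepsilon_0.
\end{align*}
A cleaner alternative comes from the optimality of the projection. Since $u_\rho^{(0)}\in\mathcal{P}_{\rm NN}^{(0)}$ and $u_0^*$ minimizes $\|u-v\|_{I,B}$ over that space, we have $\|u-u_0^*\|_{I,B}\le\|u-u_\rho^{(0)}\|_{I,B}$ outright, with no $\varepsilon_0$ needed. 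Either route is consistent with the stated constant.

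\textbf{Step 4: combine.} Substituting back gives
\begin{align*}
\|u-u_\rho^{(S)}\|_{I,B}\le \|u-u_\rho^{(0)}\|_{I,B}\prod_{s=1}^S\min\{1,2\tau_s\}+\varepsilon_0\prod_{s=1}^S\min\{1,2\tau_s\}+\varepsilon_S,
\end{align*}
which is exactly the claimed bound with $\varepsilon=\varepsilon_0\prod_{s=1}^S\min\{1,2\tau_s\}+\varepsilon_S$.

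\textbf{Main obstacle: probability bookkeeping.} Theorem~\ref{empirical error bound} holds only with probability at least $1-\delta_s$ for each fixed $s$. The bound therefore needs the events at $s=0$ and $s=S$ to hold simultaneously, which a union bound secures with probability at least $1-\delta_0-\delta_S$. If the best-approximation route in Step~3 is used, only the $s=S$ event is required. I would state this probability qualifier explicitly. I would also note that the hypotheses \eqref{assumption} are imposed on the continuous projections $u_{s-1}^*$, so no interaction between the empirical solutions and the $\tau_s$ needs to be controlled.
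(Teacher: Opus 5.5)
Your argument is correct and is essentially the paper's proof. You split off $\|u_\rho^{(S)}-u_S^*\|_{I,B}\le\varepsilon_S$ by the triangle inequality, apply \eqref{iterative convergence of the neural basis expansion method}, and then replace $\|u-u_0^*\|_{I,B}$ by $\|u-u_\rho^{(0)}\|_{I,B}+\varepsilon_0$ using Theorem~\ref{empirical error bound} at $s=0$. Your alternative in Step~3 is also valid: $u_\rho^{(0)}\in\mathcal{P}_{\rm NN}^{(0)}$, so $u_0^*$ is at least as close to $u$. This gives a slightly sharper bound without the $\varepsilon_0\prod_{s=1}^S\min\{1,2\tau_s\}$ term and needs only the $s=S$ probability event. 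Your union-bound remark is a legitimate refinement that the paper omits.
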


\begin{proof}
	By the triangle inequality for $\|\cdot\|_{I,B}$ and Theorem \ref{empirical error bound}, it follows that
	\begin{align*}
		\|u-u_\rho^{(S)}\|_{I,B}
		\le
		\|u_\rho^{(S)}-u_S^*\|_{I,B}+\|u_S^*-u\|_{I,B}
		\le
		\|u-u_S^*\|_{I,B} + \varepsilon_S, \\
		\|u-u_0^*\|_{I,B}
		\le
		\|u-u_\rho^{(0)}\|_{I,B}+\|u_\rho^{(0)}-u_0^*\|_{I,B}
		\le
		\|u-u_\rho^{(0)}\|_{I,B} + \varepsilon_0.
	\end{align*}
	Substituting \eqref{iterative convergence of the neural basis expansion method} yields
	\begin{align*}
		\|u - u_\rho^{(S)}\|_{I,B} \le \|u - u_\rho^{(0)}\|_{I,B}\prod_{s=1}^S \min\{1,2\tau_s\} + \varepsilon_0\prod_{s=1}^S \min\{1,2\tau_s\} + \varepsilon_S.
	\end{align*}
	This completes the proof.
\end{proof}

\begin{rmk}
	The convergence result in Corollary~\ref{convergence of A-TransNet} is of a saturation-type nature. The term $\varepsilon$ represents the saturation plateau, which arises from the sampling error of empirical minimization. As the number of collocation points increases, this saturation plateau can be made arbitrarily small, and the bound then reduces to a contraction estimate governed by the product $\prod_{s=1}^S \min\{1,2\tau_s\}$.
\end{rmk}

\begin{corollary}[A Posteriori Error Estimates of A-TransNet] \label{a posteriori error estimates of A-TransNet}
	Assume that the assumptions of Proposition \ref{posteriori error estimation and convergence} and Lemma \ref{loss gap} hold. Let
	$\eta_s^- := \frac{1}{1+\tau_s}\big(\varepsilon_s +(2+\tau_s)\varepsilon_{s-1}\big)$ and $\eta_s^+ := \frac{1}{1-\tau_s}\big(\varepsilon_s +(2-\tau_s)\varepsilon_{s-1}\big)$. Then 
	\begin{align*}
	\frac{1}{1+\tau_s}\|v_\rho^{(s)}\|_{I,B} - \eta_s^-
	\le \|u-u_\rho^{(s-1)}\|_{I,B}
	\le \frac{1}{1-\tau_s}\|v_\rho^{(s)}\|_{I,B} + \eta_s^+.
	\end{align*}
\end{corollary}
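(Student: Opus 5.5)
We obtain Corollary~\ref{a posteriori error estimates of A-TransNet} by transferring the continuous a posteriori bound \eqref{a posteriori error estimation} of Proposition~\ref{posteriori error estimation and convergence} to the empirical minimizers. The transfer costs only triangle inequalities and the empirical error bound of Theorem~\ref{empirical error bound}, namely $\|u_\rho^{(s)}-u_s^*\|_{I,B}\le\varepsilon_s$ for every $s$. We write $v_\rho^{(s)}:=u_\rho^{(s)}-u_\rho^{(s-1)}$, the computable increment used in Algorithm~\ref{A-TransNet algorithm}. The argument has three steps.

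\textbf{Step 1 (increment comparison).} Decompose
\begin{align*}
v_\rho^{(s)} - v_s^* = \bigl(u_\rho^{(s)}-u_s^*\bigr) - \bigl(u_\rho^{(s-1)}-u_{s-1}^*\bigr).
\end{align*}
Theorem~\ref{empirical error bound}, applied at steps $s$ and $s-1$, then gives
\begin{align*}
\bigl|\|v_\rho^{(s)}\|_{I,B}-\|v_s^*\|_{I,B}\bigr|\le\varepsilon_s+\varepsilon_{s-1}.
\end{align*}

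\textbf{Step 2 (error comparison).} In the same way,
\begin{align*}
\bigl|\|u-u_\rho^{(s-1)}\|_{I,B}-\|u-u_{s-1}^*\|_{I,B}\bigr|\le\varepsilon_{s-1}.
\end{align*}

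\textbf{Step 3 (upper bound).} Chain the estimates as
\begin{align*}
\|u-u_\rho^{(s-1)}\|_{I,B}\le \|u-u_{s-1}^*\|_{I,B}+\varepsilon_{s-1}\le \tfrac{1}{1-\tau_s}\|v_s^*\|_{I,B}+\varepsilon_{s-1}\le \tfrac{1}{1-\tau_s}\bigl(\|v_\rho^{(s)}\|_{I,B}+\varepsilon_s+\varepsilon_{s-1}\bigr)+\varepsilon_{s-1}.
\end{align*}
The collected remainder is $\frac{1}{1-\tau_s}\bigl(\varepsilon_s+(2-\tau_s)\varepsilon_{s-1}\bigr)=\eta_s^+$, since $1+(1-\tau_s)=2-\tau_s$.

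\textbf{Step 3 (lower bound).} Symmetrically,
\begin{align*}
\|u-u_\rho^{(s-1)}\|_{I,B}\ge \|u-u_{s-1}^*\|_{I,B}-\varepsilon_{s-1}\ge\tfrac{1}{1+\tau_s}\|v_s^*\|_{I,B}-\varepsilon_{s-1}\ge\tfrac{1}{1+\tau_s}\bigl(\|v_\rho^{(s)}\|_{I,B}-\varepsilon_s-\varepsilon_{s-1}\bigr)-\varepsilon_{s-1}.
\end{align*}
Collecting terms gives the remainder $\frac{1}{1+\tau_s}\bigl(\varepsilon_s+(2+\tau_s)\varepsilon_{s-1}\bigr)=\eta_s^-$.

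The argument is elementary. The only real point of care is that the two applications of Lemma~\ref{loss gap}, at steps $s$ and $s-1$, hold simultaneously, so a union bound is needed and the statement holds with probability at least $1-\delta_s-\delta_{s-1}$. I would make that explicit in the proof, or else read the corollary as conditional on both high-probability events.
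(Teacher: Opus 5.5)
Your proposal is correct and follows essentially the same route as the paper: you substitute the two triangle-inequality comparisons $\bigl|\|u-u_\rho^{(s-1)}\|_{I,B}-\|u-u_{s-1}^*\|_{I,B}\bigr|\le\varepsilon_{s-1}$ and $\bigl|\|v_\rho^{(s)}\|_{I,B}-\|v_s^*\|_{I,B}\bigr|\le\varepsilon_s+\varepsilon_{s-1}$ into the continuous two-sided bound of Proposition~\ref{posteriori error estimation and convergence}, and your bookkeeping of the remainders into $\eta_s^{\pm}$ is right. Your remark that the high-probability events from Lemma~\ref{loss gap} at steps $s$ and $s-1$ must hold simultaneously, so the conclusion holds with probability at least $1-\delta_s-\delta_{s-1}$ by a union bound, is a valid refinement that the paper's proof leaves implicit.
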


\begin{proof}
	By the triangle inequality for $\|\cdot\|_{I,B}$ together with Theorem~\ref{empirical error bound}, we have
	\begin{align*}
		\|u-u_\rho^{(s-1)}\|_{I,B}
		\le
		\|u-u_{s-1}^*\|_{I,B} + \|u_\rho^{(s-1)}-u_{s-1}^*\|_{I,B}
		\le
		\|u-u_{s-1}^*\|_{I,B}+\varepsilon_{s-1}, \\
		\|u-u_{s-1}^*\|_{I,B}
		\le
		\|u-u_\rho^{(s-1)}\|_{I,B} + \|u_\rho^{(s-1)}-u_{s-1}^*\|_{I,B}
		\le
		\|u-u_\rho^{(s-1)}\|_{I,B}+\varepsilon_{s-1}.
	\end{align*}
	Thus
	\begin{align*}
		\|u-u_{s-1}^*\|_{I,B}-\varepsilon_{s-1}
		\le\|u-u_\rho^{(s-1)}\|_{I,B}
		\le\|u-u_{s-1}^*\|_{I,B}+\varepsilon_{s-1}.
	\end{align*}
	
	Since $v_s^* = u_s^* - u_{s-1}^*$, we derive a bound for its norm via the triangle inequality and Theorem~\ref{empirical error bound}:
	\begin{align*}
		\|v_s^*\|_{I,B}
		&=\|u_s^*-u_{s-1}^*\|_{I,B}\\
		&\le\|u_s^*-u_\rho^{(s)}\|_{I,B}
		+\|u_\rho^{(s)}-u_\rho^{(s-1)}\|_{I,B}
		+\|u_\rho^{(s-1)}-u_{s-1}^*\|_{I,B}\\
		&\le\|v_\rho^{(s)}\|_{I,B}+\varepsilon_s+\varepsilon_{s-1}.
	\end{align*}
	Similarly, starting from $\|v_\rho^{(s)}\|_{I,B}$, we obtain
	\begin{align*}
		\|v_\rho^{(s)}\|_{I,B}
		\le\|v_s^*\|_{I,B}+\varepsilon_s+\varepsilon_{s-1}.
	\end{align*}
	Combining the above with \eqref{a posteriori error estimation}, we arrive at
	\begin{align*}
		\|u-u_\rho^{(s-1)}\|_{I,B} \ge \frac{1}{1+\tau_s}\|v_\rho^{(s)}\|_{I,B} - \frac{1}{1+\tau_s}(\varepsilon_s +(2+\tau_s)\varepsilon_{s-1}), \\
		\|u-u_\rho^{(s-1)}\|_{I,B} \le \frac{1}{1-\tau_s}\|v_\rho^{(s)}\|_{I,B} + \frac{1}{1-\tau_s}(\varepsilon_s +(2-\tau_s)\varepsilon_{s-1}).
	\end{align*}
	This completes the proof.
\end{proof}

\begin{rmk}
	Due to the presence of the explicit perturbation terms $\eta_s^-$ and $\eta_s^+$, the two-sided a posteriori error bound derived in Corollary~\ref{a posteriori error estimates of A-TransNet} does not constitute a rigorous deterministic stopping condition. Although the stopping rule adopted in Algorithm~\ref{A-TransNet algorithm} is motivated by this bilateral error relation, it remains a heuristic criterion. Nevertheless, this bound inspires our a posteriori error estimation strategy and holds with high probability for most practical cases.
\end{rmk}

\section{Numerical Experiments} \label{Sec:NumerExper}

In this section, we apply the proposed A-TransNet to various types of partial differential equations, thus validating the effectiveness of our algorithm.

All experiments are conducted in a Python environment with the following configurations. When selecting the shape parameters of hidden layer neurons, the number of points $N_m$ in the neighborhood defined in \eqref{shape parameters formula} is counted using the KD-Tree method. For solving the least squares problem, the SVD method is adopted, and the $\texttt{cp.linalg.lstsq}$ function from the CuPy library is invoked with parameter $\texttt{rcond = None}$ to accelerate the computation. All experiments are conducted on a workstation with an NVIDIA A100 GPU. The random seed is set to 42 for reproducibility. Moreover, to mitigate random effects, results are averaged across 10 independent trials.

We uniformly adopt the hyperbolic tangent function $\text{tanh}(x)=\frac{e^x-e^{-x}}{e^x+e^{-x}}$ as the activation function $\sigma(x)$. The accuracy in the numerical experiments is measured by the $L^2$ and $L^\infty$ norms, both of which can be approximately calculated via the grid method as
\begin{align*}
	E_2 = \|u - \hat{u}\|_{L^2} \approx \sqrt{\frac{1}{J_T}\sum_{i=1}^{J_T} |u(x_i) - \hat{u}(x_i)|^2}, \quad E_\infty = \|u - \hat{u}\|_{L^\infty} \approx \max_{1\le i\le J_T} |u(x_i) - \hat{u}(x_i)|,
\end{align*}
where $J_T$ denotes the number of test points.

The main parameters adopted in our experiments are summarized as follows:
\begin{itemize}
	\item $J_I$, $J_B$ and $J_T$: numbers of interior, boundary and test points, respectively.
	
	\item $M_s$: number of hidden layer neurons at the $s$-th iteration step in A-TransNet. Particularly, $M_0$ denotes the number of hidden layer neurons for the small-scale TransNet pretrained to approximate gradients.
	
	\item $R$: radius for sampling the location parameters of neurons.
	
	\item $\mu$: allocation parameter in $[0,1]$ controlling the sampling strategy of neuron location parameters. $\mu M$ neurons are sampled via the active subspaces method to capture solution anisotropy, while the remaining $(1-\mu)M$ neurons are sampled via the local gradient method to capture solution sharpness.
	
	\item $C$: empirical constant that depends on the specific form and parameters of the PDE, and is typically obtained via the small-scale golden search method.
\end{itemize}
The detailed parameters in the numerical experiments are listed in Table \ref{table1}.

\begin{table}[!htbp]
	\centering
	\vspace{-0.2cm}
	\begin{minipage}{0.815\textwidth}
		\centering
		\caption{Detailed parameters used in the numerical experiments.}
		\resizebox{\linewidth}{!}{
			\begin{tabular}{cccccccc}
				\hline
				Examples & $J_I$ & $J_B$ & $J_T$ & $M_s$ & $R$  & $\mu$ & $C$ \\ \hline
				Sec \ref{Subsec:5.1} $\varepsilon = 10^{-2}$ & 700 & 2 & 1401 & $20 \times 2^{s}$ & 1.2 & $2/3$ & 0.10 \\
				Sec \ref{Subsec:5.1} $\varepsilon = 10^{-3}$& 5000 &  2 & 10001 & $50 \times 2^{s}$ & 1.2 & $2/3$ & 0.10 \\
				Sec \ref{Subsec:5.2} Case 1& $50 \times 50$ & 200 & $101 \times 101$ & $50 \times 2^{s}$ & 1.5 & $2/3$ & 0.08 \\
				Sec \ref{Subsec:5.2} Case 2 & $50 \times 50$ & 200 & $101 \times 101$ & $50 \times 2^{s}$ & 1.5 & $2/3$ & 0.10 \\
				Sec \ref{Subsec:5.2} Case 3& $50 \times 50$ & 200 & $101 \times 101$ & $50 \times 2^{s}$ & 1.5 & $2/3$ & 0.08 \\
				Sec \ref{Subsec:5.3} & $150 \times 150$ & 600 & $301 \times 301$ & $500 \times 2^{s}$ & 1.5 & $1/3$ & 0.14 \\
				Sec \ref{Subsec:5.4} Case 1 & $150 \times 150$ & 600 & $301 \times 301$ & $500 \times 2^{s}$ & 1.5 & $2/3$ & 0.11 \\
				Sec \ref{Subsec:5.4} Case 2 & $150 \times 150$ & 600 & $301 \times 301$ & $500 \times 2^{s}$ & 1.5 & $2/3$ & 0.11 \\ 
				Sec \ref{Subsec:5.5}& $50 \times 50$ & 200 & $101 \times 101$ & $150 \times 2^{s}$ & 1.5 & $2/3$ & 0.08\\
				Sec \ref{Subsec:5.6} Case 1& $100 \times 100$ & 400 & $201 \times 201$ & $1000\times 2^{\max(s-1,\,0)}$ & 1.5 & $2/3$ & 0.10 \\
				Sec \ref{Subsec:5.6} Case 2& $40 \times 40 \times 40 $ & 480 & $81 \times 81 \times 81$ & $1500\times 2^{\max(s-1,\,0)}$ & 1.8 & $2/3$ & 0.14 \\
				Sec \ref{Subsec:5.7} & $150 \times 300$ & 1800 & $251 \times 512$ & $500\times 2^s$ & 1.5 & $2/3$ & 0.18 \\ \hline
			\end{tabular}
		}
		\label{table1}
	\end{minipage}
	\vspace{-0.2cm}
\end{table}

The competing models are presented as follows. TransNet refers to the method presented in Subsection~\ref{TransNet}. PINN denotes the standard physics-informed neural network \cite{Raissi2019PINNs}, where we adopt a four-layer architecture with $M/4$ neurons per layer to ensure a fair comparison. AG-RaNN (single hidden layer, SHL) is the single-hidden-layer form of the method proposed in \cite{Dang2024AGRaNN}.

\subsection{One-Dimensional Convection-Diffusion Equation with Boundary Layers} \label{Subsec:5.1}

Here we consider the one-dimensional convection-diffusion equation with boundary layers as
\begin{align*}
	\begin{aligned}
		-\varepsilon u''(x) - u'(x) &= 1, \quad x \in \Omega, \\
		u(x) &= 0, \quad x \in \partial \Omega,
	\end{aligned}
\end{align*}
where $\Omega = (-1, 1)$. The solution to this problem comprises a smooth component together with a boundary layer term $e^{\frac{1-x}{\varepsilon}}$:
\begin{align*}
	u(x) = -x -
	\left(
	\frac{e^{\frac{2}{\varepsilon}} + 1}{e^{\frac{2}{\varepsilon}} - 1}
	\right)
	\left(
	\frac{2e^{\frac{1-x}{\varepsilon}}}{e^{\frac{2}{\varepsilon}} + 1} - 1
	\right),
\end{align*}
where $\varepsilon$ is a small positive parameter that determines the difficulty of solving this problem. In the following numerical tests, $\varepsilon = 10^{-2}$ and $\varepsilon = 10^{-3}$ are adopted, with the test problem taken from \cite{Weng2026DCM}. The small parameter induces sharp variations of the exact solution near $x=-1$, where the gradient can reach nearly $2000$, forming a boundary layer. This feature poses substantial challenges for obtaining high-precision numerical solutions \cite{Weng2026DCM}.

In this subsection, the tolerance $\epsilon$ of A‑TransNet is set to $10^{-6}$ to verify the effectiveness of a posteriori error estimates for guiding iteration termination. Given the substantial difficulty variation across different problems and the fact that A‑TransNet generally requires few iterations (typically within 10 steps), subsequent examples terminate iterations only by the prescribed number of iterations without utilizing the tolerance criterion.

The numerical results of A-TransNet are shown in Figure \ref{fig:5.1}, where the left and right panels correspond to the cases $\varepsilon=10^{-2}$ and $\varepsilon=10^{-3}$, respectively. It can be observed that the problem becomes more challenging for $\varepsilon=10^{-3}$ compared with $\varepsilon=10^{-2}$, requiring more neurons to achieve comparable accuracy. The subplots on the left of each panel present the exact solution, the numerical solution obtained by A-TransNet, and the pointwise error, demonstrating that A-TransNet accurately captures the sharp boundary layer near $x=-1$. The log-scale error curves with respect to iteration number $s$ on the right indicate that both $E_2$ and $E_\infty$ decay rapidly within a small number of iterations, consistent with the convergence properties established in our theoretical analysis. 

Furthermore, a detailed comparison between these results and those obtained by the standard TransNet, PINN, and AG-RaNN (SHL) is presented in Table \ref{table5.1}. For $\varepsilon = 10^{-2}$, A-TransNet achieves an error of $L_2$ norm of 4.50e-09 with only 160 neurons in 0.05 seconds, while the deep collocation method proposed in \cite{Weng2026DCM} requires 640 neurons to attain an accuracy on the order of $10^{-6}$. In particular, it can be observed that capturing the rapid variation of the solution near $x=-1$ is extremely challenging when $\varepsilon = 10^{-3}$. The best-performing competing model AG-RaNN (SHL) achieves an error of 7.56e-05 in $0.79$ seconds. By contrast, A-TransNet attains an error of $L_2$ norm on the order of $10^{-7}$ with merely 800 neurons within $0.72$ seconds, demonstrating its superior performance.

\begin{figure}[!htb]
	\centering
	\vspace{-0.2cm}
	\includegraphics[width=0.98\textwidth]{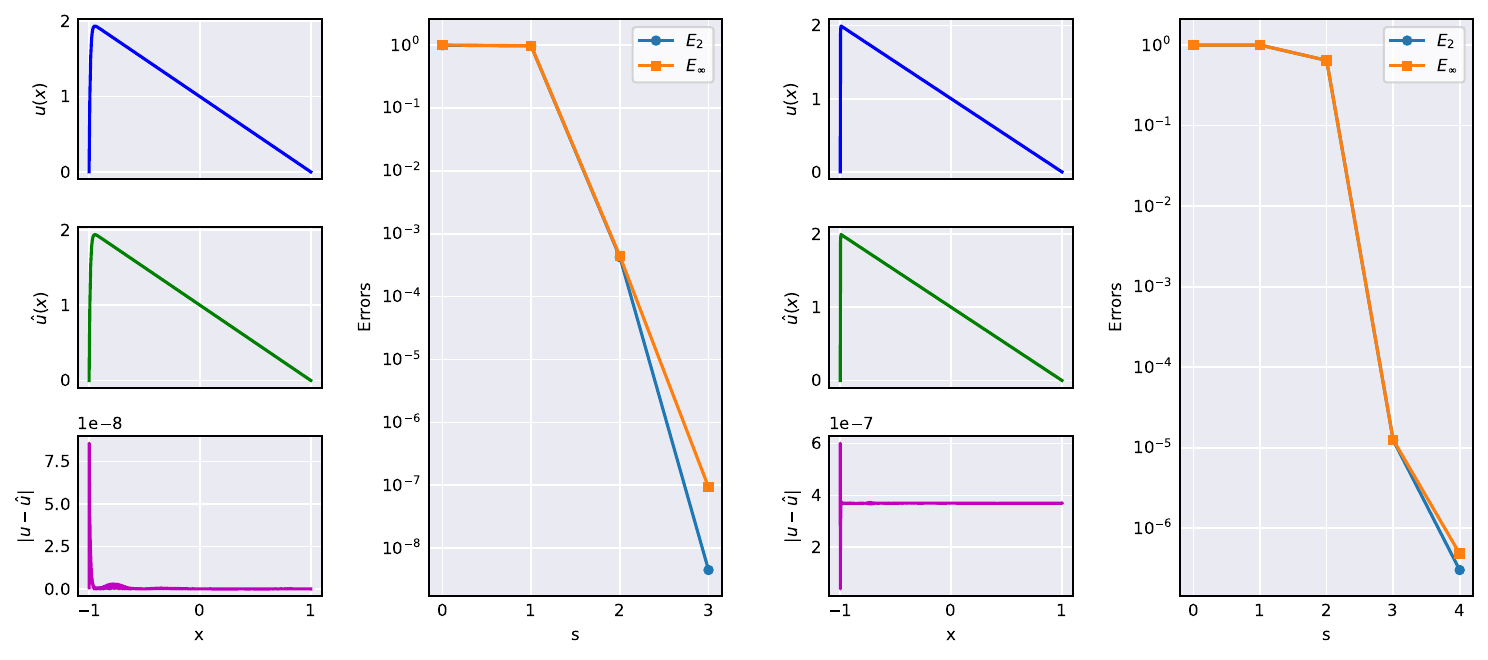}
	\vspace{-0.35cm}
	\caption{Numerical results of A-TransNet for $\varepsilon = 10^{-2}$ (the left two columns) and $\varepsilon = 10^{-3}$ (the left two columns) in 5.1.}
	\label{fig:5.1}
\end{figure}
\vspace{-0.6cm}
\begin{table}[!htbp]
	\centering
	\vspace{-0.2cm}
	\begin{minipage}{0.7\textwidth}
		\centering
		\caption{Comparison of errors for examples in 5.1.}
		\resizebox{\linewidth}{!}{
			\begin{tabular}{ccccccc}
				\hline
				\multirow{3}{*}{Methods} & \multicolumn{3}{c}{$\varepsilon=10^{-2}$}& \multicolumn{3}{c}{$\varepsilon=10^{-3}$} \\ \cmidrule(lr){2-4}\cmidrule(lr){5-7} & \multicolumn{3}{c}{$M=160$}&\multicolumn{3}{c}{$M=800$} \\ \cmidrule(lr){2-4}\cmidrule(lr){5-7}
				& $E_2$ & $E_\infty$ & Time(s) & $E_2$ & $E_\infty$ & Time(s)  \\ \hline
				TransNet & 1.42e-04 & 1.48e-04 & 0.03 & 7.24e-03 & 7.25e-03 & 0.62 \\
				PINN & 7.98e-05 & 1.80e-04 & 96.52 & 1.04e-00 & 1.50e-00 & 97.18  \\
				AG-RaNN (SHL) & 5.53e-06 & 7.67e-06 & 0.05 & 7.56e-05 & 7.63e-05 & 0.79  \\ 
				\textbf{A-TransNet} & \textcolor{red}{4.50e-09} & \textcolor{red}{9.35e-08} & \textcolor{red}{0.05} & \textcolor{red}{3.03e-07} & \textcolor{red}{4.84e-07} & \textcolor{red}{0.72}  \\\hline
			\end{tabular}
		}
		\label{table5.1}
	\end{minipage}
	\vspace{-0.3cm}
\end{table}

\subsection{Poisson Equation with Anisotropic Solution} \label{Subsec:5.2}

Consider the Poisson equation given by
\begin{align} \label{poisson}
	\begin{aligned}
		-\Delta u(\bx) &= f(\bx), \quad \bx \in \Omega, \\
		u(\bx) &= g(\bx), \quad \bx \in \partial \Omega,
	\end{aligned}
\end{align}
where $\Omega = (-1, 1)^2$. We investigate three test cases with distinct anisotropic analytical solutions adopted from \cite{Pieper2024Nonuniform}.

\begin{description}
	\item[Case 1:] 
	\begin{align*}
		u(\bx) = \sin\left(5x_1 + 5\sqrt{2}x_2\right), \quad f(\bx) = 75\sin\left(5x_1 + 5\sqrt{2}x_2\right).
	\end{align*}
	\item[Case 2:] 
	\begin{align*}
		u(\bx) = e^{-(x_1 + 2x_2)^2}, \quad f(\bx) = -\left(20x_1^2 + 80x_1x_2 + 80x_2^2 - 10\right)e^{-(x_1 + 2x_2)^2}.
	\end{align*}
	\item[Case 3:] 
	\begin{align*}
		u(\bx) = 10\left(x_1 + 0.5x_2 + 3\right)^{-3}, \quad f(\bx) = -150\left(x_1 + 0.5x_2 + 3\right)^{-5}.
	\end{align*}
\end{description}

Figure \ref{fig:5.2.1} show the exact solutions and pointwise errors. It can be observed that A-TransNet well captures the anisotropic features of the solution, enabling high-precision numerical solutions for anisotropic problems. The iterative error curves for the three test cases are presented in Figure \ref{fig:5.2.2}, demonstrating the stability of A-TransNet for this class of problems. Table \ref{table5.2} summarizes the comparison results among different methods. Across all three test cases, A-TransNet achieves an accuracy improvement of at least two orders of magnitude under close computational overhead, demonstrating the effectiveness and superiority of A-TransNet for solving PDEs with anisotropic solutions.

\begin{figure}[!htb]
	\centering
	\vspace{-0.2cm}
	\includegraphics[width=0.98\textwidth]{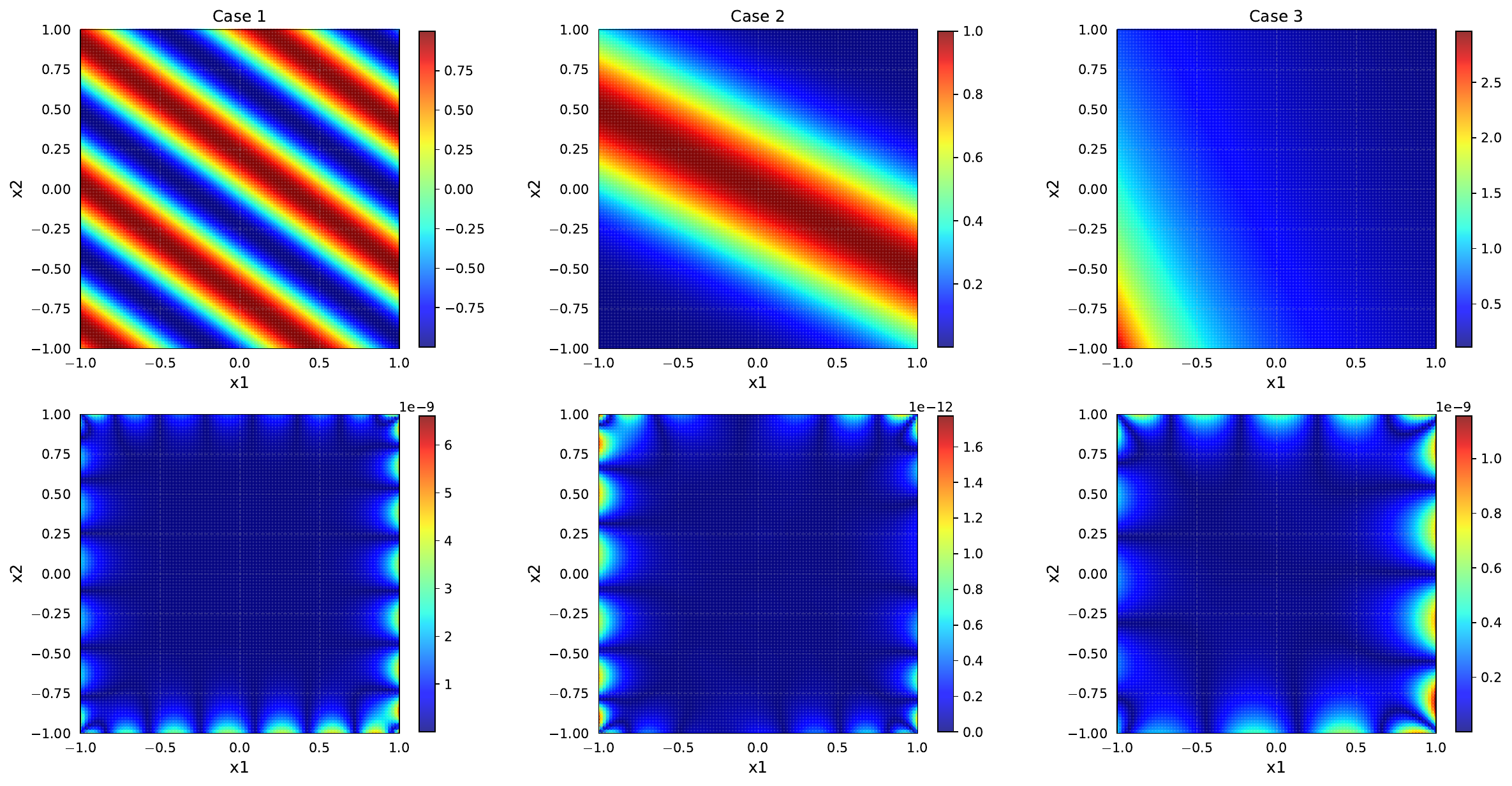}
	\vspace{-0.45cm}
	\caption{Numerical results of A-TransNet for examples in 5.2. Top row: numerical solution; bottom row: absolute error.}
	\label{fig:5.2.1}
\end{figure}

\begin{figure}[!htb]
	\centering
	\vspace{-0.2cm}
	\includegraphics[width=0.98\textwidth]{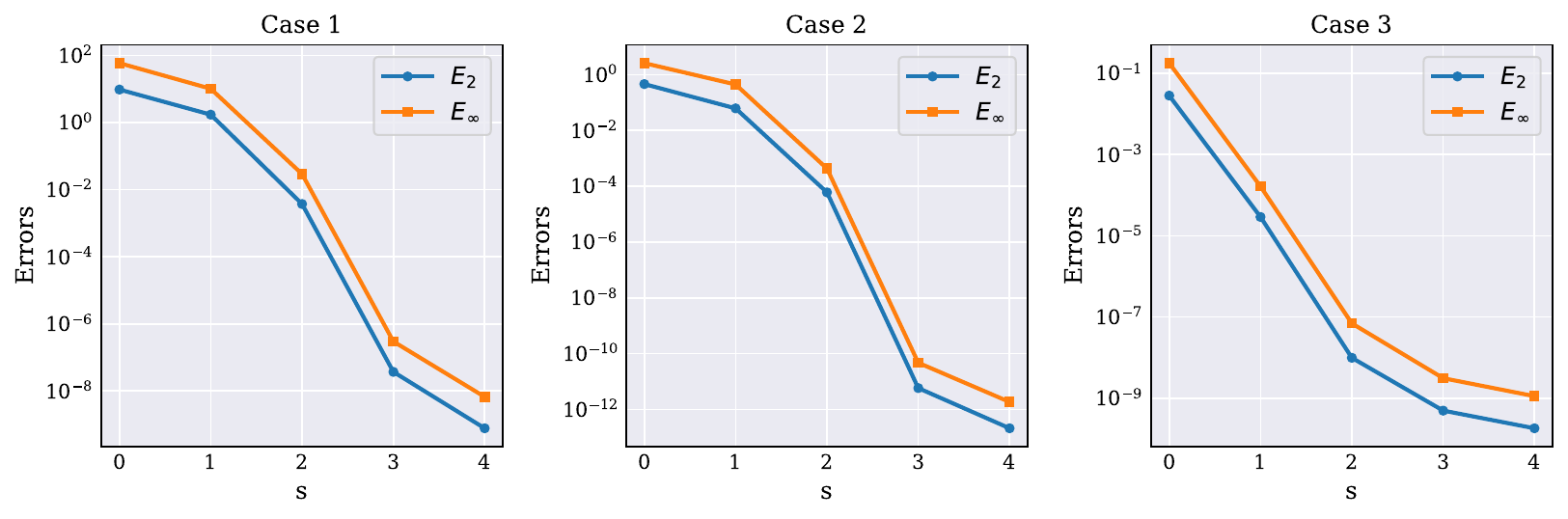}
	\vspace{-0.55cm}
	\caption{Error at each iteration of A-TransNet for examples in 5.2.}
	\label{fig:5.2.2}
	\vspace{-0.1cm}
\end{figure}

\vspace{-0.2cm}
\begin{table}[!htb]
	\centering
	\vspace{-0.1cm}
	\begin{minipage}{\textwidth}
		\centering
		\caption{Comparison of errors for examples in 5.2.}
		\resizebox{\linewidth}{!}{
			\begin{tabular}{cccccccccc}
				\hline
				\multirow{3}{*}{Methods} & \multicolumn{3}{c}{Case 1}& \multicolumn{3}{c}{Case 2} & \multicolumn{3}{c}{Case 3} \\ \cmidrule(lr){2-4}\cmidrule(lr){5-7}\cmidrule(lr){8-10}
				& \multicolumn{3}{c}{$M=800$} & \multicolumn{3}{c}{$M=800$} & \multicolumn{3}{c}{$M=800$} \\ \cmidrule(lr){2-4}\cmidrule(lr){5-7}\cmidrule(lr){8-10}
				& $E_2$ & $E_\infty$ & Time(s) & $E_2$ & $E_\infty$ & Time(s) & $E_2$ & $E_\infty$ & Times(s) \\ \hline
				TransNet & 3.46e-07 & 5.35e-06 & 0.38 & 4.87e-09 & 5.91e-08 & 0.41 & 2.01e-08 & 2.95e-07 & 0.40 \\ 
				PINN & 2.27e-03 & 6.88e-03 & 175.74 & 1.02e-04 & 5.99e-04 & 149.33 & 6.41e-05 & 3.08e-04 & 147.94 \\
				AG-RaNN (SHL) & 4.39e-08 & 5.31e-07 & 0.52 & 2.32e-08 & 2.59e-07 & 0.62 & 1.46e-08 & 2.15e-07 & 0.49 \\ 
				\textbf{A-TransNet} & \textcolor{red}{7.73e-10} & \textcolor{red}{6.68e-09} & \textcolor{red}{0.58} & \textcolor{red}{2.10e-13} & \textcolor{red}{1.85e-12} & \textcolor{red}{0.68} & \textcolor{red}{1.82e-10} & \textcolor{red}{1.12e-09} & \textcolor{red}{0.67} \\\hline
			\end{tabular}
		}
		\label{table5.2}
	\end{minipage}
	\vspace{-0.3cm}
\end{table}

\subsection{Poisson Equation with Rapidly Varying Source Term} \label{Subsec:5.3}

We investigate the Poisson equation \eqref{poisson} on the domain $\Omega = (0, 1)^2$ with the exact solution \cite{Dang2024AGRaNN, Weng2026DCM}:
\begin{align*}
	u(\bx) = 16(1-x_1)x_1(1-x_2)x_2\left( \frac{1}{2} + \frac{1}{\pi}\arctan\left( \frac{r - \|\bx - \boldsymbol{c}\|^2}{\varepsilon} \right) \right).
\end{align*}
The parameter $\varepsilon$ dictates the steepness of the transition region in the solution. Specifically, a smaller $\varepsilon$ value gives rise to steeper gradients and induces more localized variations in the vicinity of the spherical boundary centered at $\boldsymbol{c}$ with radius $\sqrt{r}$. In our numerical experiments, we fix the parameters as $r = 1/16$, $\boldsymbol{c} = (1/2, 1/2)^T$, and $\varepsilon = 1/60$, with the corresponding source term visualized in Figure \ref {fig:5.3.1} (the left two columns). Such steep and localized variations in the source term render the numerical solution of the partial differential equation a highly challenging task \cite{Dang2024AGRaNN, Weng2026DCM}.

\begin{figure}[!htb]
	\centering
	\vspace{-0.2cm}
	\includegraphics[width=0.98\textwidth]{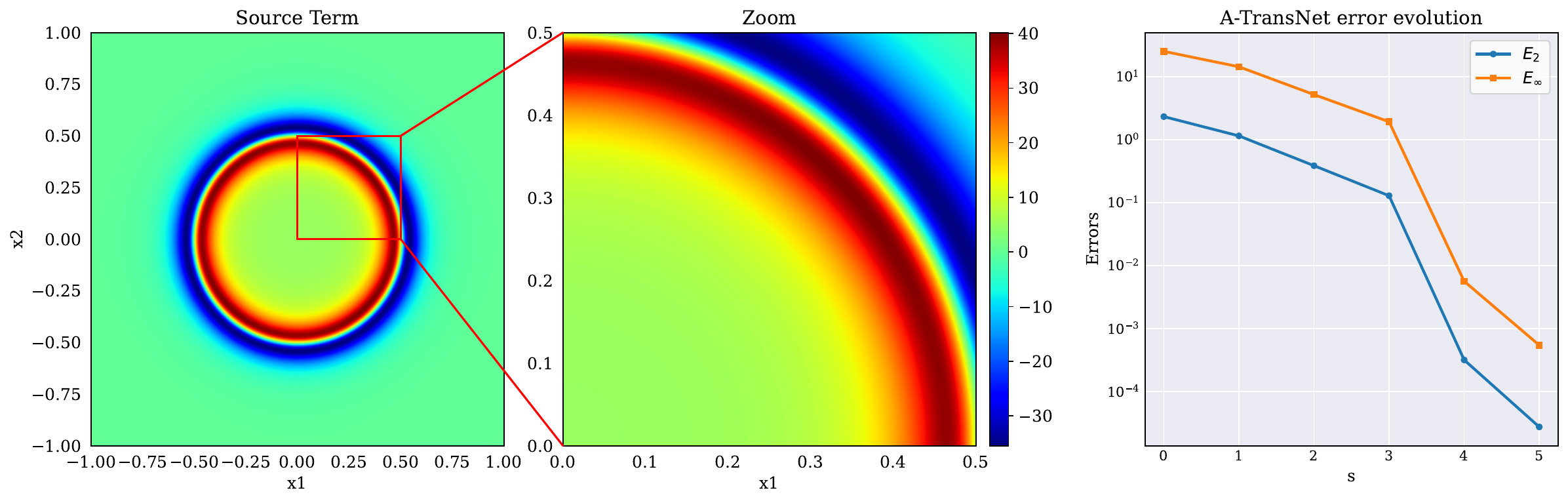}
	\vspace{-0.45cm}
	\caption{The source term corresponds to $\varepsilon = 1/60$ (the left two columns) and error at each iteration of A-TransNet (the right one column) for example in 5.3.}
	\label{fig:5.3.1}
	\vspace{-0.3cm}
\end{figure}

The exact solution, the numerical solution of A-TransNet, and the pointwise absolute error are presented in Figure \ref{fig:5.3.2}. Figure \ref{fig:5.3.1} (the right one column) illustrates the error evolution of A-TransNet during neuron expansion iterations, where the error exhibits a stable decay. Moreover, the superiority of the proposed A-TransNet for such problems can be clearly observed from the numerical results in Table \ref{table5.3}, in comparison with the conventional TransNet. In particular, the TransNet with 16000 neurons achieves an error of $\mathrm{L}_2$ norm of only 6.77e-03 but takes up to 133.28 seconds. In contrast, when iterated to 8000 neurons, A-TransNet obtains an error of 3.19e-04 with a computational time of only 52.68 seconds.

\begin{figure}[!htb]
	\centering
	\vspace{-0.2cm}
	\includegraphics[width=0.98\textwidth]{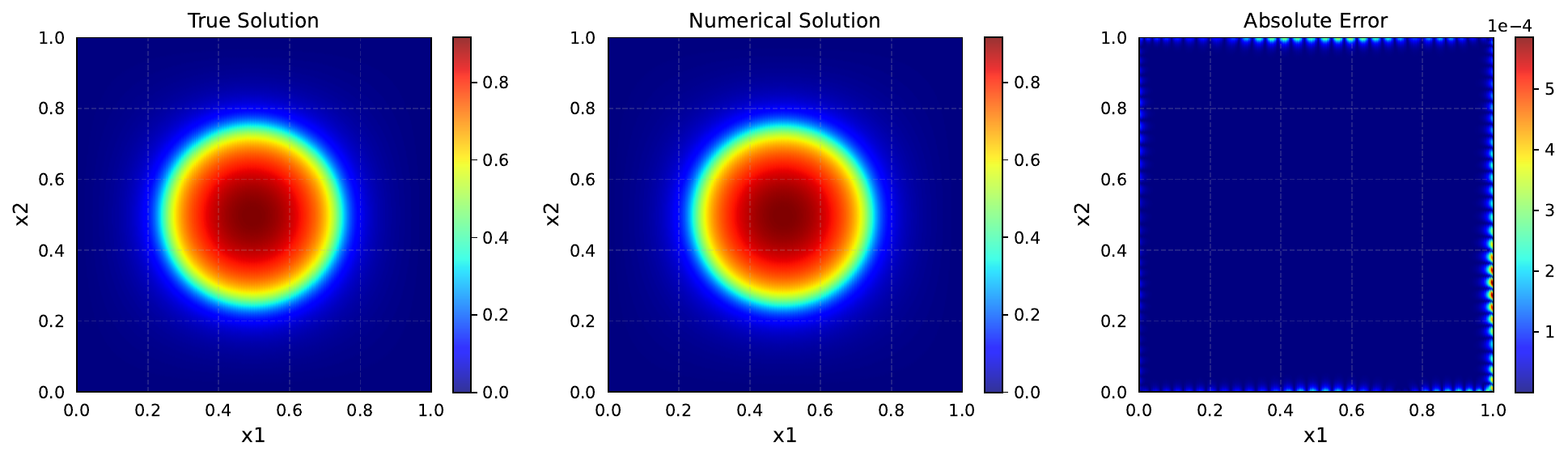}
	\vspace{-0.45cm}
	\caption{Numerical results of A-TransNet for example in 5.3.}
	\label{fig:5.3.2}
	\vspace{-0.25cm}
\end{figure}

\begin{table}[!htbp]
	\centering
	\vspace{-0.4cm}
	\begin{minipage}{0.5\textwidth}
		\centering
		\caption{Comparison of errors for example in 5.3.}
		\resizebox{\linewidth}{!}{
			\begin{tabular}{ccccc}
				\hline
				Methods & M & $E_2$ & $E_\infty$ & Time(s) \\ \hline
				\multirow{2}{*}{TransNet} & 8000 & 2.32e-02 & 4.30e-01 & 31.82 \\
				& 16000 & 6.77e-03 & 1.28e-01 & 133.28 \\
				\multirow{2}{*}{\textbf{A-TransNet}} & 8000 & \textcolor{red}{3.19e-04} & \textcolor{red}{5.65e-03} & \textcolor{red}{52.68} \\ 
				& 16000 & \textcolor{red}{2.76e-05} & \textcolor{red}{5.47e-04} & \textcolor{red}{162.01} \\ \hline
			\end{tabular}
		}
		\label{table5.3}
	\end{minipage}
	\vspace{-0.2cm}
\end{table}

\subsection{Multi-peaked Poisson Equation} \label{Subsec:5.4}

As in \cite{Weng2026DCM}, we consider the Poisson equation \eqref{poisson} on the domain $\Omega = (-1, 1)^2$, which admits a multi-peak exact solution as
\begin{align*}
	u(\bx) = \sum_{k=1}^{K} e^{-250 \| \boldsymbol{x} - \boldsymbol{x}_k \|^2},
\end{align*}
where $K$ denotes the number of peaks, and $\boldsymbol{x}_k$ represents the center of the $k$-th peak. Due to the particularity of the problem, we set the penalty parameters as $\lambda_I=1$ and $\lambda_B=3$ in \eqref{loss function}. We test the following two cases separately.

\begin{description}
	\item[Case 1:] 
	\begin{align*}
		K = 4, \quad
		\boldsymbol{x}_1 = (0.5, 0.5)^T, \quad
		\boldsymbol{x}_2 = (0.5, -0.5)^T, \quad
		\boldsymbol{x}_3 = (-0.5, 0.5)^T, \quad
		\boldsymbol{x}_4 = (-0.5, -0.5)^T.
	\end{align*}
	\item[Case 2:] 
	\begin{align*}
		K = 8, \quad
		&
		\boldsymbol{x}_1 = (0.5, 0.5)^T,\
		\boldsymbol{x}_2 = (0.5, -0.5)^T,\
		\boldsymbol{x}_3 = (-0.5, 0.5)^T,\
		\boldsymbol{x}_4 = (-0.5, -0.5)^T, \\
		&
		\boldsymbol{x}_5 = (0, \sqrt{2}/2)^T,\
		\boldsymbol{x}_6 = (0, -\sqrt{2}/2)^T,\
		\boldsymbol{x}_7 = (\sqrt{2}/2, 0)^T,\
		\boldsymbol{x}_8 = (-\sqrt{2}/2, 0)^T.
	\end{align*}
\end{description}

Figure \ref{fig:5.4.1} display the numerical solutions for the two cases, while the iterative error curves are shown in Figure \ref{fig:5.4.2}. The leftmost column of Figure \ref{fig:5.4.1} visualizes the exact solutions. This problem is challenging due to the multiple sharp peaks in the exact solution, which require the numerical method to capture localized high-gradient features. The corresponding error comparisons with the conventional TransNet are presented in Table \ref{table5.4}. Obviously, A-TransNet performs well on such multi-peaked problems, with an error of $L_2$ norm on the order of $10^{-4}$ at 8000 neurons and $10^{-6}$ at 16000 neurons.

\begin{figure}[!htb]
	\centering
	\vspace{-0.2cm}
	\includegraphics[width=0.98\textwidth]{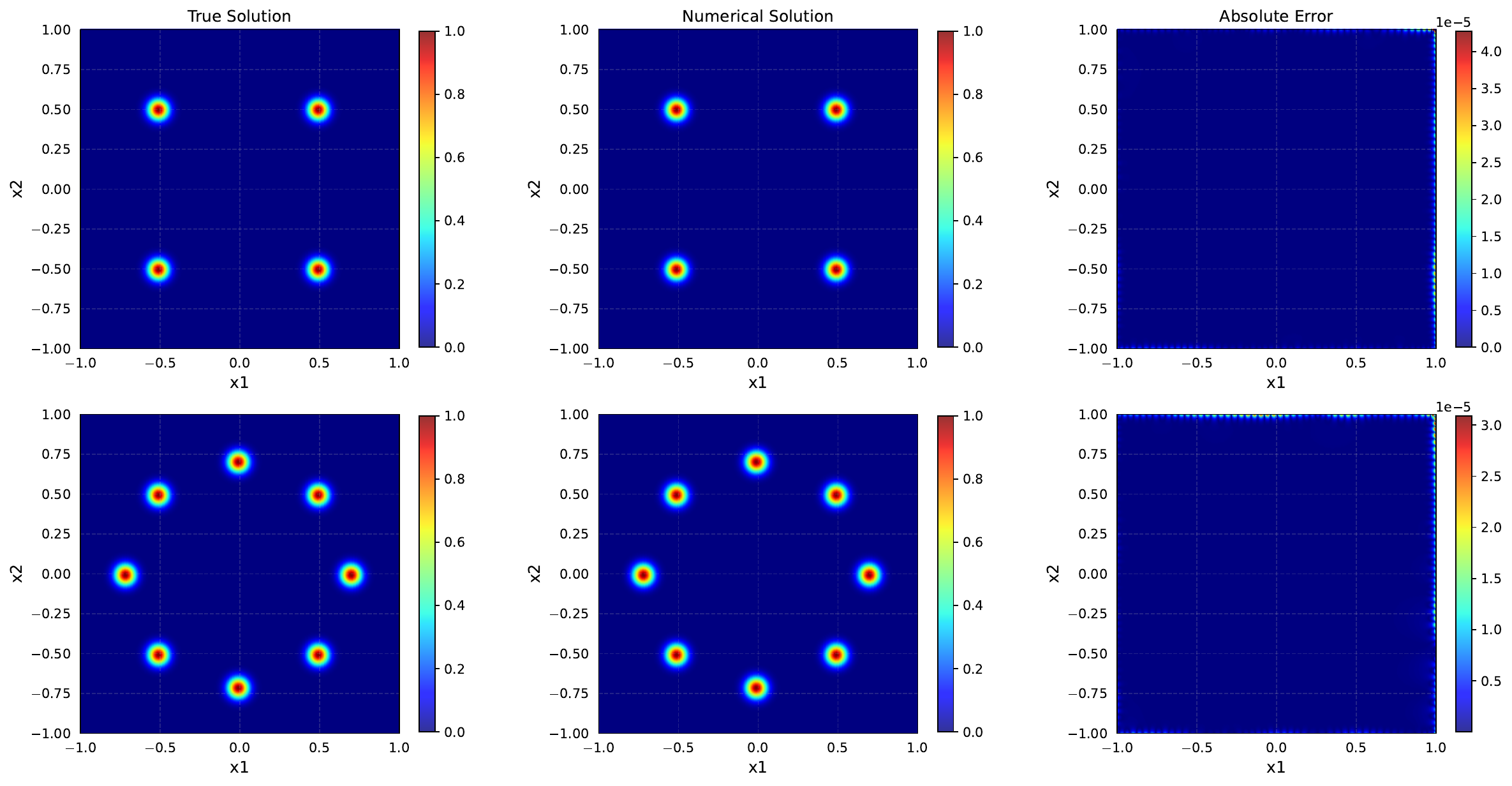}
	\vspace{-0.35cm}
	\caption{Numerical results of A-TransNet for Case 1 (top row) and Case 2 (bottom row) in 5.4.}
	\vspace{-0.3cm}
	\label{fig:5.4.1}
\end{figure}

\begin{figure}[!htb]
	\centering
	\vspace{-0.1cm}
	\includegraphics[width=0.6\textwidth]{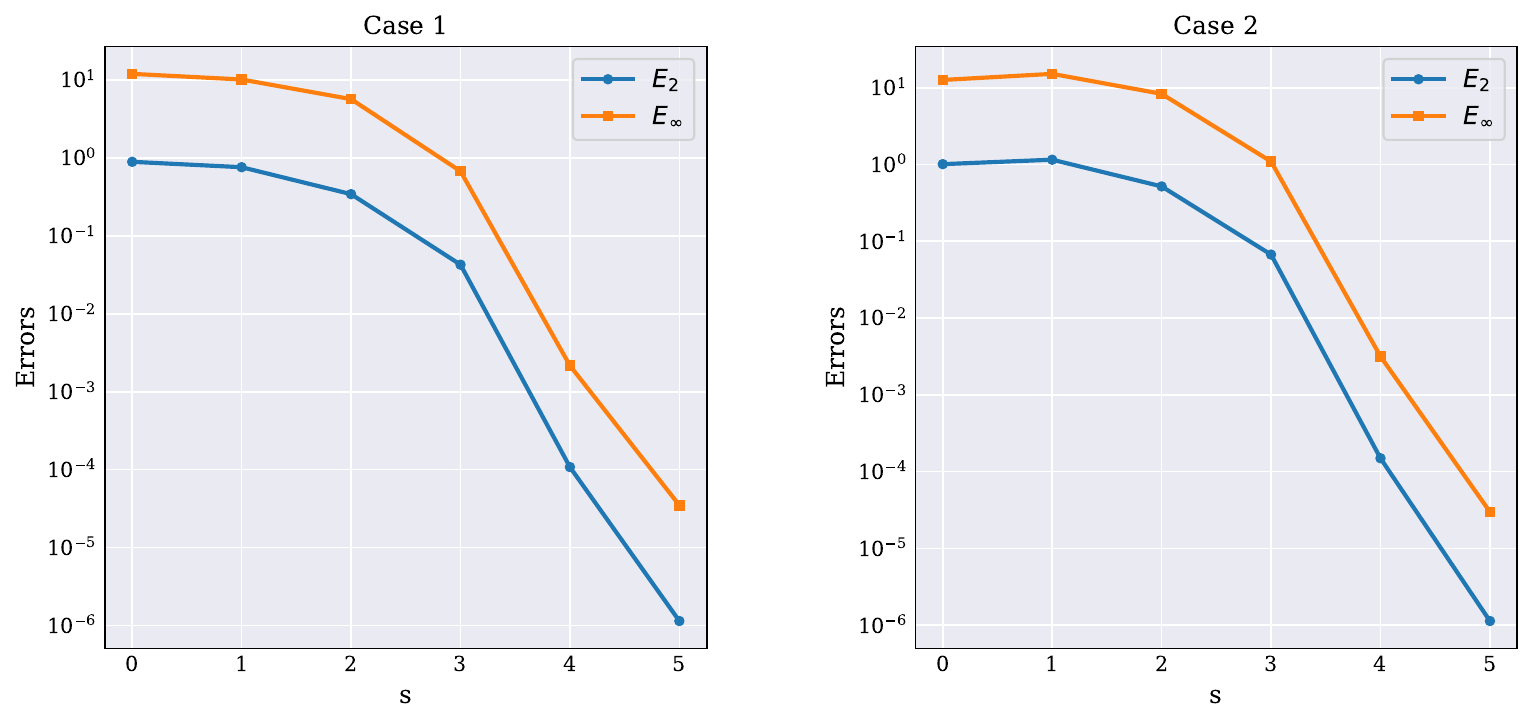}
	\vspace{-0.5cm}
	\caption{Error at each iteration of A-TransNet for examples in 5.4.}
	\label{fig:5.4.2}
	\vspace{-0.15cm}
\end{figure}

\begin{table}[!htb]
	\vspace{-0.2cm}
	\centering
	\begin{minipage}{0.79\textwidth}
		\centering
		\vspace{-0.2cm}
		\caption{Comparison of errors for examples in 5.4.}
		\resizebox{\linewidth}{!}{
			\begin{tabular}{cccccccc}
				\hline
				\multirow{2}{*}{Methods} & \multirow{2}{*}{M} & \multicolumn{3}{c}{Case 1}& \multicolumn{3}{c}{Case 2} \\ \cmidrule(lr){3-5}\cmidrule(lr){6-8}
				& & $E_2$ & $E_\infty$ & Time(s) & $E_2$ & $E_\infty$ & Time(s) \\ \hline
				\multirow{2}{*}{TransNet} & 8000 & 8.42e-04 & 1.88e-02 & 32.12 & 8.28e-04 & 1.47e-02 & 31.87  \\
				& 16000 & 3.01e-05 & 6.93e-04 & 119.35 & 5.36e-05 & 9.31e-04 & 114.85 \\
				\multirow{2}{*}{\textbf{A-TransNet}} & 8000 & \textcolor{red}{1.08e-04} & \textcolor{red}{2.16e-03} & \textcolor{red}{53.26} & \textcolor{red}{1.49e-04} & \textcolor{red}{3.17e-03} & \textcolor{red}{53.52} \\ 
				& 16000 & \textcolor{red}{1.14e-06} & \textcolor{red}{3.49e-05} & \textcolor{red}{155.57} & \textcolor{red}{1.13e-06} & \textcolor{red}{2.99e-05} & \textcolor{red}{156.13} \\ \hline
			\end{tabular}
		}
		\label{table5.4}
	\end{minipage}
	\vspace{-0.2cm}
\end{table}
\vspace{-0.2cm}
\subsection{Biharmonic Equation} \label{Subsec:5.5}

Similar to \cite{Weng2026DCM}, we now test the biharmonic equation given as
\begin{align*}
	\begin{aligned}
		&\Delta^2 u(\boldsymbol{x}) = f(\boldsymbol{x}), \qquad &&\boldsymbol{x} \in \Omega, \\
		&u(\boldsymbol{x}) = \partial_n u(\boldsymbol{x}) = 0, &&\boldsymbol{x} \in \partial \Omega,
	\end{aligned}
\end{align*}
where $\Omega = (-1, 1)^2$ with the exact solution defined as
\begin{align*}
	u(\boldsymbol{x}) = \sin^2(\pi x_1) \sin^2(\pi x_2) + (1 - x_1^2)^4 (1 - x_2^2)^4.
\end{align*}

Figures \ref{fig:5.5.1} display the numerical results and the iterative error evolution curves, respectively, while Table \ref{table5.5} provides a comparison with the results obtained by the conventional TransNet. The results show that A-TransNet still yields a moderate improvement over other methods when dealing with such a relatively smooth multi-order problem, achieving an error of $L_2$ norm of 1.12e-06 with 1200 neurons within 1.04 seconds.

\begin{figure}[!htb]
	\centering
	\vspace{-0.25cm}
	\includegraphics[width=0.98\textwidth]{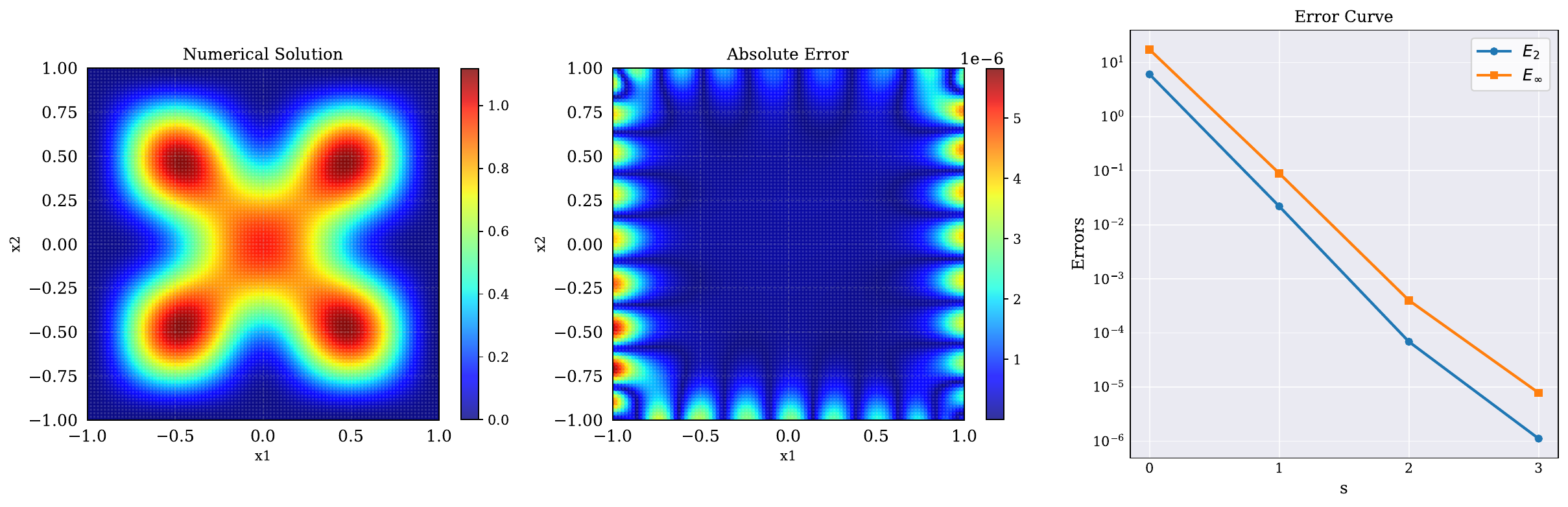}
	\vspace{-0.45cm}
	\caption{Numerical results of A-TransNet for example in 5.5.}
	\label{fig:5.5.1}
	\vspace{-0.1cm}
\end{figure}

\begin{table}[!htbp]
	\centering
	\vspace{-0.3cm}
	\begin{minipage}{0.5\textwidth}
		\centering
		\vspace{-0.2cm}
		\caption{Comparison of errors for example in 5.5.}
		\resizebox{\linewidth}{!}{
			\begin{tabular}{ccccc}
				\hline
				Methods & M & $E_2$ & $E_\infty$ & Time(s)  \\ \hline
				TransNet & 1200 & 9.00e-06 & 6.85e-05 & 0.71 \\
				PINN & 1200 & 5.22e-01 & 1.54e-00 & 794.24 \\
				AG-RaNN (SHL) & 1200 & 7.09e-06 & 4.65e-05 & 0.95 \\ 
				\textbf{A-TransNet} & 1200 & \textcolor{red}{1.12e-06} & \textcolor{red}{7.83e-06} & \textcolor{red}{1.04} \\\hline
			\end{tabular}
		}
		\label{table5.5}
	\end{minipage}
	\vspace{-0.2cm}
\end{table}
\vspace{-0.2cm}
\subsection{Steady-State Heat Conduction Equation with Variable Coefficients} \label{Subsec:5.6}

Consider the steady-state heat conduction equation with variable coefficients given by
\begin{align*}
	\begin{aligned}
		\sum_{i=1}^{d} \frac{\partial}{\partial x_i} \left( k(x_i) \frac{\partial T(\boldsymbol{x})}{\partial x_i} \right) &= f(\boldsymbol{x}), \quad &&\boldsymbol{x} \in \Omega, \\
		T(\boldsymbol{x}) &= 0, \quad &&\boldsymbol{x} \in \partial \Omega,
	\end{aligned}
\end{align*}
where $\Omega = (-1, 1)^d$, $k(x) = 1 - x^2$ is the thermal conductivity \cite{Ling2025Novel, Ling2025Hybrid}. The exact solution to this problem is given by the following two cases.
\vspace{-0.1cm}
\begin{description}
	\item[Case 1:] 
	\begin{align*}
		d=2, \quad T(\bx) = (1 - x_1^2)(1 - x_2^2) \sin\bigl(6\pi (x_1 + \sqrt{2}x_2)\bigr).
	\end{align*}
	\item[Case 2:] 
	\begin{align*}
		d=3, \quad T(\bx) = (1 - x_1^2)(1 - x_2^2)(1 - x_3^2) \sin\bigl(3\pi (x_1 + \sqrt{2}x_2 + 2x_3)\bigr).
	\end{align*}
\end{description}
\vspace{-0.15cm}

Figures \ref{fig:5.6.1} and \ref{fig:5.6.2} present the exact solution, the numerical solution of A-TransNet, and the pointwise error for two cases. Figure \ref{fig:5.6.3} shows the error curves, which decrease stably as the iteration proceeds. We observe that A-TransNet accurately captures the anisotropy inherent in the solution for both two-dimensional and three-dimensional problems, achieving high-precision numerical solutions. Table \ref{table5.6} summarizes the performance comparisons among different methods. Notably, A-TransNet achieves superior performance with high precision: for Case 1, equipped with 4000 neurons, it delivers an accuracy on the order of $10^{-8}$, in contrast to TransNet, which only reaches $10^{-6}$; for Case 2, A-TransNet achieves an error of $L_2$ norm of 1.10e-03 within 73.33 seconds using only 3000 neurons, while other methods can only obtain an error on the order of $10^{-3}$ even with 6000 neurons and more than 100 seconds of computational time.

\begin{figure}[!htb]
	\centering
	\vspace{-0.2cm}
	\includegraphics[width=0.98\textwidth]{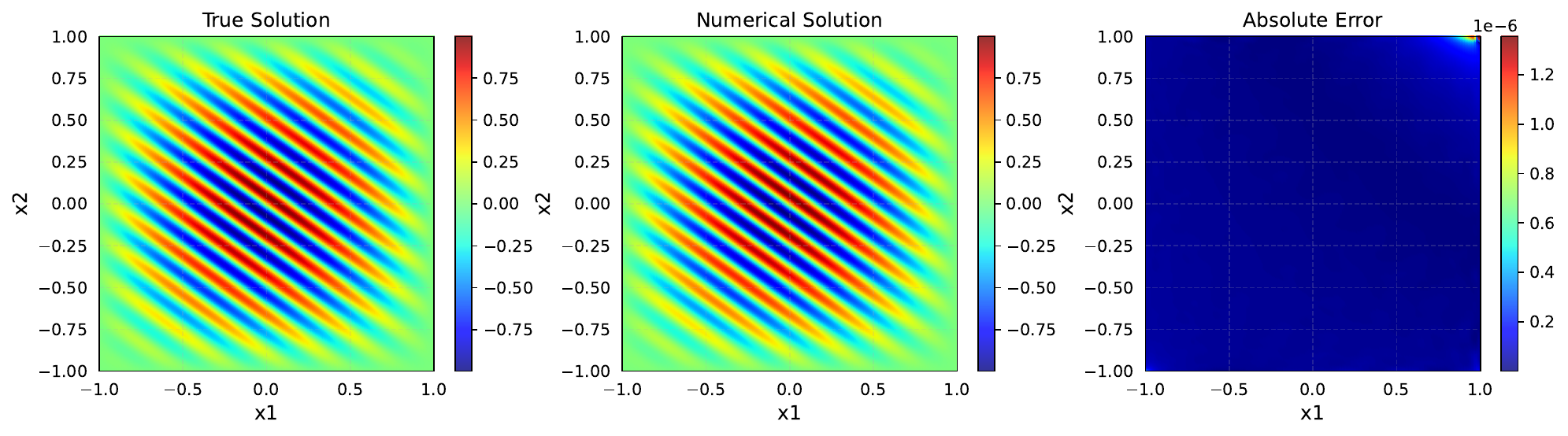}
	\vspace{-0.5cm}
	\caption{Numerical results of A-TransNet for Case 1 in 5.6.}
	\label{fig:5.6.1}
	\vspace{-0.3cm}
\end{figure}

\begin{figure}[!htb]
	\centering
	\vspace{-0.45cm}
	\includegraphics[width=0.98\textwidth]{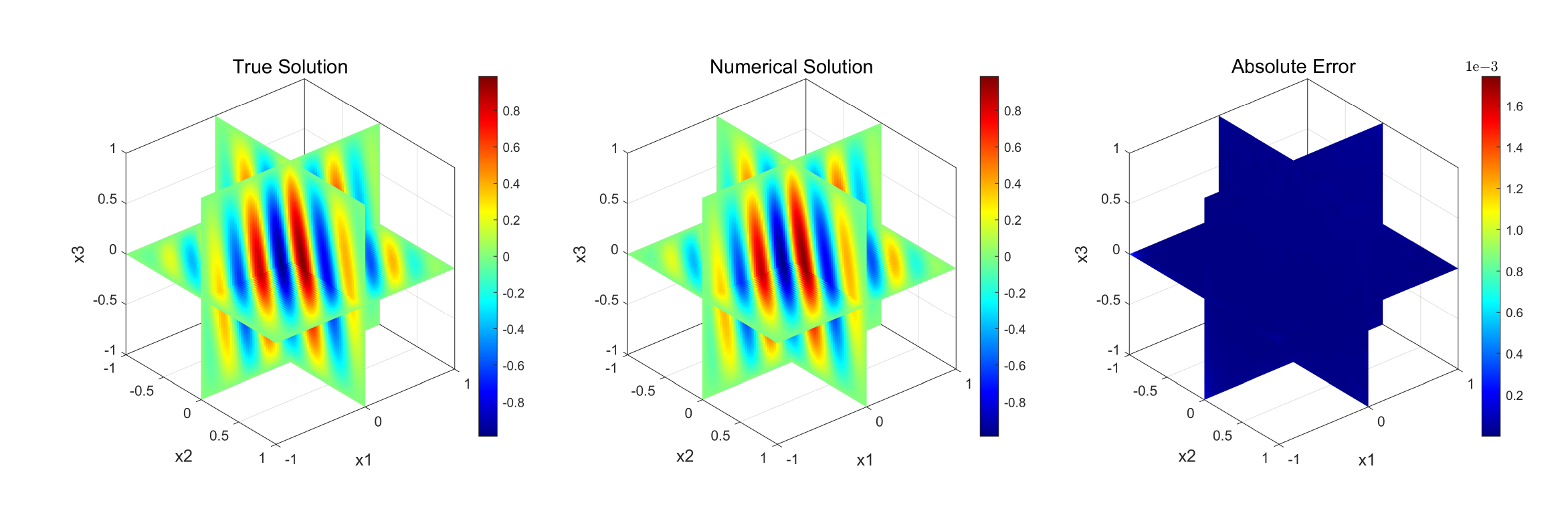}
	\vspace{-0.65cm}
	\caption{Numerical results of A-TransNet for Case 2 in 5.6.}
	\label{fig:5.6.2}
	\vspace{-0.05cm}
\end{figure}

\begin{figure}[!htb]
	\centering
	\vspace{-0.2cm}
	\includegraphics[width=0.57\textwidth]{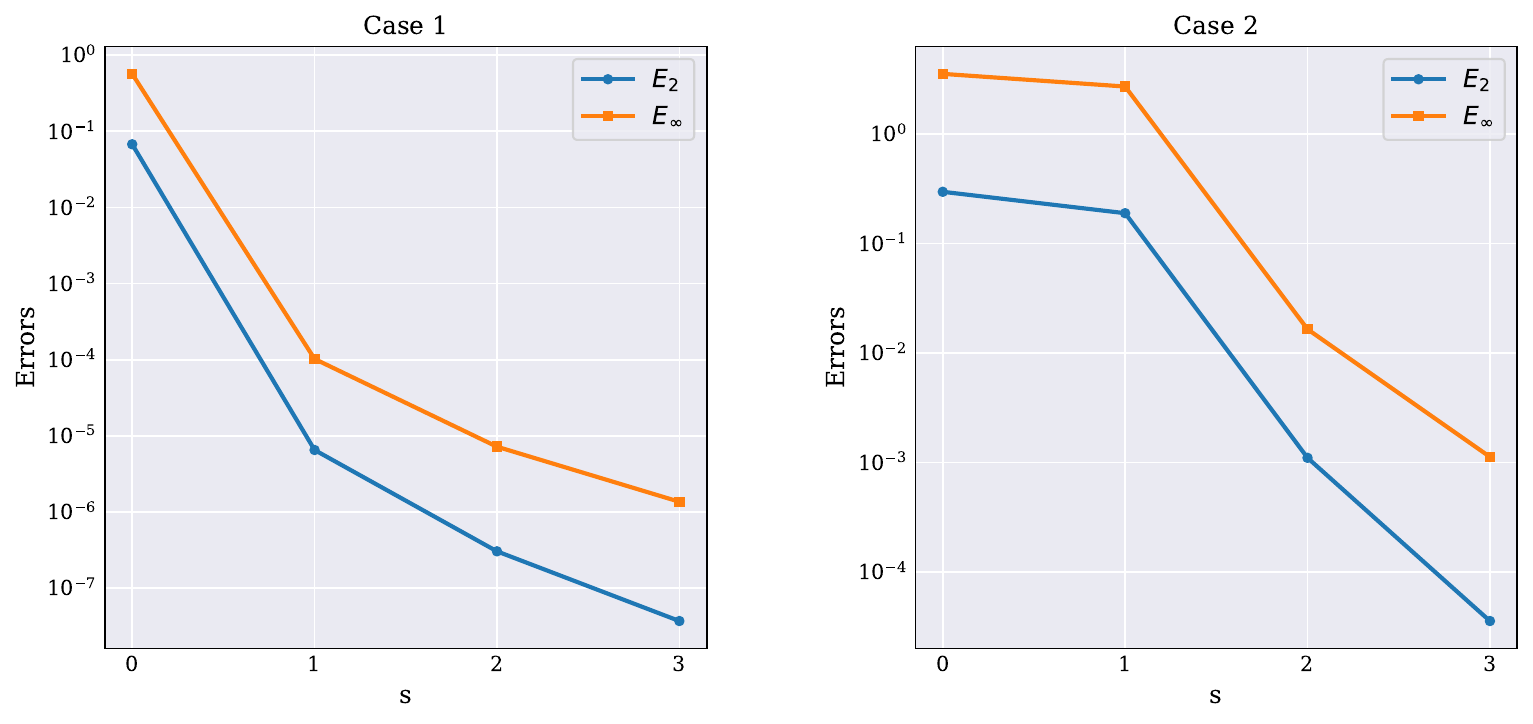}
	\vspace{-0.55cm}
	\caption{Error at each iteration of A-TransNet for examples in 5.6.}
	\label{fig:5.6.3}
\end{figure}

\begin{table}[!htbp]
	\centering
	\begin{minipage}{0.9\textwidth}
		\centering
		\vspace{-0.5cm}
		\caption{Comparison of errors for examples in 5.6.}
		\resizebox{\linewidth}{!}{
			\begin{tabular}{ccccccccc}
				\hline
				\multirow{2}{*}{Methods} &  \multicolumn{4}{c}{Case 1}& \multicolumn{4}{c}{Case 2} \\ \cmidrule(lr){2-5}\cmidrule(lr){6-9}
				& M & $E_2$ & $E_\infty$ & Time(s) & M & $E_2$ & $E_\infty$ & Time(s) \\ \hline
				\multirow{2}{*}{TransNet} & 2000 & 1.41e-04 & 8.31e-04 & 3.10 & 3000 & 1.45e-01 & 1.63e-00 & 41.68 \\
				& 4000 & 1.65e-06 & 3.12e-05 & 7.10 & 6000 & 6.95e-03 & 8.38e-02 & 113.38 \\ 
				\multirow{2}{*}{AG-RaNN (SHL)} & 2000 & 5.34e-05 & 5.49e-04 & 3.98 & 3000 & 2.59e-02 & 3.62e-01 & 58.65 \\
				& 4000 & 3.19e-06 & 2.89e-05 & 8.72 & 6000 & 1.36e-03 & 2.15e-02 & 145.05 \\ 
				\multirow{2}{*}{\textbf{A-TransNet}} & 2000 & \textcolor{red}{3.04e-07} & \textcolor{red}{7.19e-06} & \textcolor{red}{5.69} & 3000 & \textcolor{red}{1.10e-03} & \textcolor{red}{1.65e-02} & \textcolor{red}{73.33} \\ 
				& 4000 & \textcolor{red}{3.70e-08} & \textcolor{red}{1.36e-06} & \textcolor{red}{12.49} & 6000 & \textcolor{red}{3.58e-05} & \textcolor{red}{1.13e-03} & \textcolor{red}{165.62} \\\hline
			\end{tabular}
		}
		\label{table5.6}
	\end{minipage}
	\vspace{-0.35cm}
\end{table}

\subsection{A nonlinear example: the Allen-Cahn equation} \label{Subsec:5.7}

Last, as in \cite{Dang2024AGRaNN, Urban2025PINN}, we focus on the nonlinear Allen–Cahn equation:
\begin{align*}
	\begin{aligned}
		u_t - \varepsilon u_{xx} + \kappa(u^3 - u) &= 0, &(t, x) &\in (0, T) \times (-1, 1), \\
		u(t, -1) &= u(t, 1), &t &\in (0, T), \\
		u_x(t, -1) &= u_x(t, 1), &t &\in (0, T), \\
		u(0, x) &= u_0(x), &x &\in (-1, 1),
	\end{aligned}
\end{align*}
where $\kappa = 5$ , $T = 1$, and $\varepsilon$ is the interface thickness parameter, set to $\varepsilon = 10^{-4}$, yielding an extremely sharp interface and thus a numerically challenging problem for testing the proposed method \cite{Dang2024AGRaNN}. The initial condition is taken as
\begin{align*}
	u_0(x) = x^2 \cos(\pi x).
\end{align*}
Due to the nonlinearity of the equation, Picard iteration is adopted to solve the resulting nonlinear least squares system. Explicit treatment and stabilization of the nonlinear term yield the iterative scheme below:
\begin{align*}
	u^{(k+1)}_t - \varepsilon u^{(k+1)}_{xx} + \kappa S u^{(k+1)}= -\kappa((u^{(k)})^3 - (S+1)u^{(k)}), \quad k=0,1,\dots,K,
\end{align*}
where the stabilizing parameter $S = 1$, the initial value $u^{(0)} = 0$, and the iteration number $K = 20$. Furthermore, since the initial condition need to be strictly satisfied, we set the penalty parameters $\lambda_I=1$ and $\lambda_B=100$ in \eqref{loss function} as in \cite{Urban2025PINN}.

Since the exact solution of this problem is not available, we adopt the high-precision solution proposed in \cite{Urban2025PINN} as the reference solution. Figure \ref{fig:5.7.1} (the left two columns) illustrates the final numerical results of A-TransNet, where the A-TransNet solution and the pointwise error are presented in sequence. The iterative error evolution curves are displayed in Figure \ref{fig:5.7.1} (the right one column). Even for this challenging nonlinear equation whose exact solution has no explicit closed-form expression, the iterative error decays steadily, demonstrating the stability of A-TransNet. Numerical comparisons with TransNet are shown in Table~\ref{table5.7}, which reveals the advantages of A-TransNet. Specifically, A-TransNet achieves an error of $L_2$ norm of 1.27e-04 with only 4000 neurons, compared with 2.96e-02 obtained by TransNet at the same number of neurons.

\begin{figure}[!htb]
	\centering
	\vspace{-0.25cm}
	\includegraphics[width=0.98\textwidth]{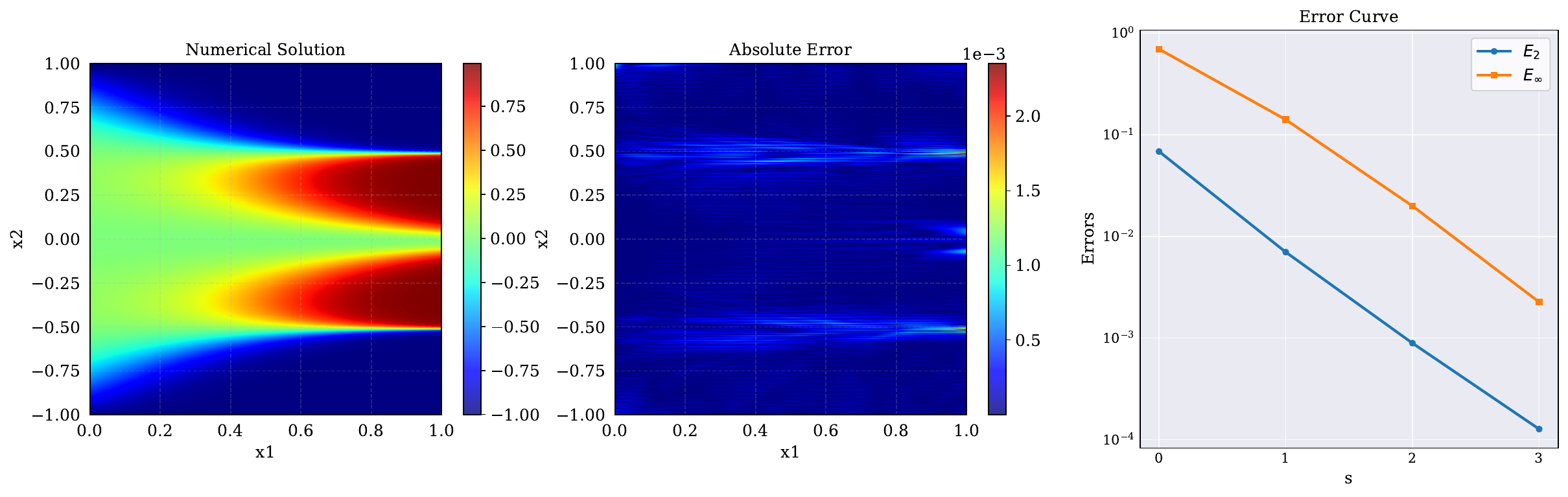}
	\vspace{-0.45cm}
	\caption{Numerical results of A-TransNet for example in 5.7.}
	\label{fig:5.7.1}
	\vspace{-0.4cm}
\end{figure}

\begin{table}[!htbp]
	\centering
	\vspace{-0.5cm}
	\begin{minipage}{0.42\textwidth}
		\centering
		\caption{Comparison of errors for example in 5.7.}
		\resizebox{\linewidth}{!}{
			\begin{tabular}{cccc}
				\hline
				Methods & M & $E_2$ & $E_\infty$ \\ \hline
				\multirow{2}{*}{TransNet} & 2000 & 4.08e-02 & 5.28e-01 \\
				& 4000 & 2.96e-02 & 4.49e-01 \\
				\multirow{2}{*}{\textbf{A-TransNet}} & 2000 & \textcolor{red}{8.90e-04} & \textcolor{red}{1.99e-02} \\ 
				& 4000 & \textcolor{red}{1.27e-04} & \textcolor{red}{2.26e-03} \\ \hline
			\end{tabular}
		}
		\label{table5.7}
	\end{minipage}
	\vspace{-0.3cm}
\end{table}

\subsection{Ablation Study on Allocation Parameter} \label{Subsec:5.8}

To investigate the rationality of the choice of $\mu$ in numerical experiments, a series of ablation studies are carried out. First, for the three test cases in Subsection \ref{Subsec:5.2}, the results of the ablation studies are illustrated in Figure \ref{fig:5.2.3}. It can be found that the influence of $\mu$ on the error remains limited within a reasonable range. Moreover, for problems with anisotropic solutions, a larger value of $\mu$ yields smaller errors, which is consistent with our theoretical analysis.

\begin{figure}[!htb]
	\centering
	\vspace{-0.2cm}
	\includegraphics[width=0.98\textwidth]{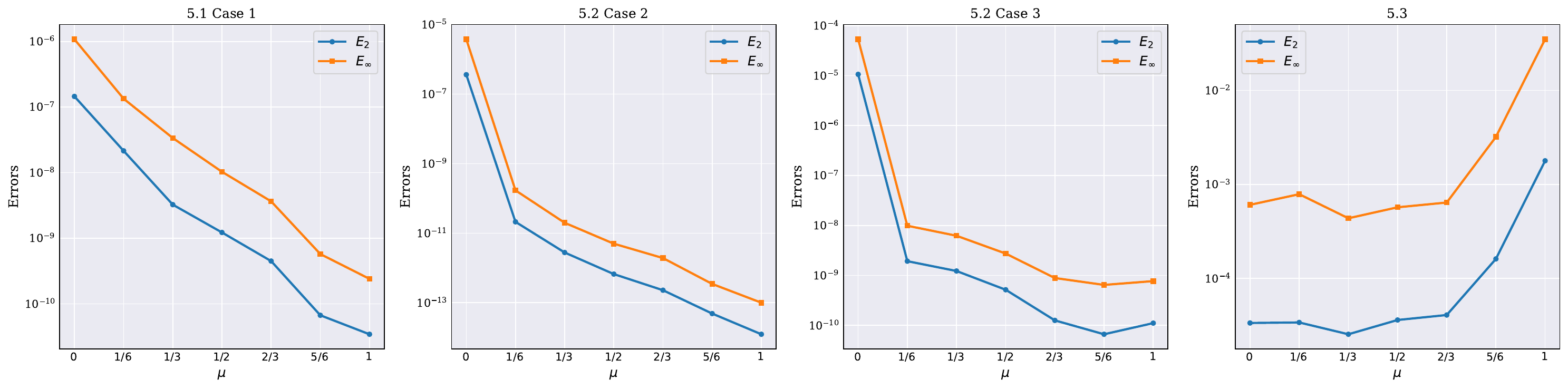}
	\vspace{-0.45cm}
	\caption{Ablation study on the allocation parameter $\mu$ for examples in 5.2 and 5.3.}
	\label{fig:5.2.3}
\end{figure}

Ablation study on $\mu$ is also performed for the example in Subsection \ref{Subsec:5.3}, which is presented in Figure \ref{fig:5.3.2} (right). It can be observed that within a reasonable range, the choice of $\mu$ has a slight influence on the numerical results for this problem, which verifies the rationality of setting $\mu=1/3$.

Figure \ref{fig:5.7.3} presents the ablation results of the parameter $\mu$ for the test case in Subsection \ref{Subsec:5.7}. The left error curves indicate that the model maintains low and stable errors over a reasonable range of $\mu$, demonstrating its insensitivity to this parameter. The right subplots illustrate the absolute error distributions corresponding to $\mu=0$, $\mu=1/3$, and $\mu=1$. In comparison with $\mu=2/3$, $\mu=0$ causes the model to excessively prioritize sharp features, resulting in global imbalance. In contrast, $\mu=1$ renders the model incapable of capturing such features, thereby inducing significant local errors. The results validate that $\mu=2/3$ is a robust choice, which well balances the anisotropy and sharp characteristics of the solution in this problem.

\begin{figure}[!htb]
	\vspace{-0.1cm}
	\centering
	\includegraphics[width=\textwidth]{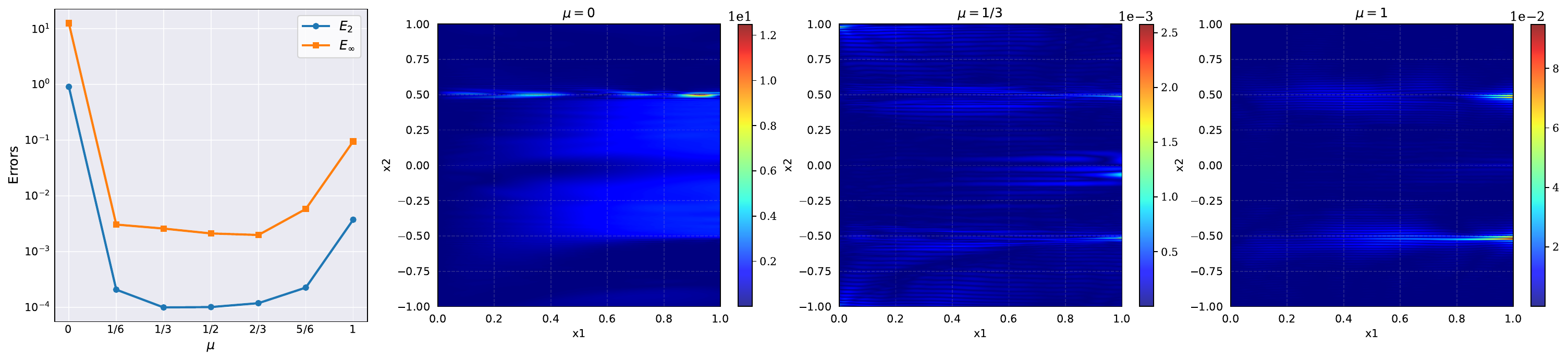}
	\vspace{-0.8cm}
	\caption{Ablation study on the allocation parameter $\mu$ for examples in 5.7.}
	\label{fig:5.7.3}
	\vspace{-0.3cm}
\end{figure}

\subsection{Numerical study of full resampling in iterations} \label{Subsec:5.9}

Another direct iterative strategy is to resample all neurons according to the gradient information of the available approximate solution during iterations. Even though full neuron resampling is unstable in theory, numerical experiments show that the error can still decrease rapidly within a few iterative steps and then become stable under iterations with a fixed neuron count, provided the number of neurons is sufficiently large to ensure a reasonably accurate approximate gradient.

Figure \ref{fig:5.8} presents the error evolution for different test cases: for the example in Subsection \ref{Subsec:5.3}, 8000 neurons are resampled in each iteration; for Case 1 in Subsection \ref{Subsec:5.6}, 2000 neurons are resampled in each iteration; for Case 2 in Subsection \ref{Subsec:5.6}, 3000 neurons are resampled in each iteration; for the example in Subsection \ref{Subsec:5.7}, 2000 neurons are resampled in each iteration. It can be seen that the error exhibits a sharp drop in the first one or two iterations and then stabilizes. This implies that we may also design an iterative strategy with a fixed number of neurons, as proposed in \cite{Deng2025AFCM}. Nevertheless, compared with the iterative neuron expansion strategy, it lacks theoretical guarantees including convergence analysis and a posteriori error estimates, and its maximum achievable accuracy is bounded by the prescribed number of neurons. Even so, this strategy inspires flexible usage of our approach. For instance, resampling with a fixed neuron count may be performed at a certain iteration step within A-TransNet.

\begin{figure}[!htb]
	\centering
	\vspace{-0.2cm}
	\includegraphics[width=\textwidth]{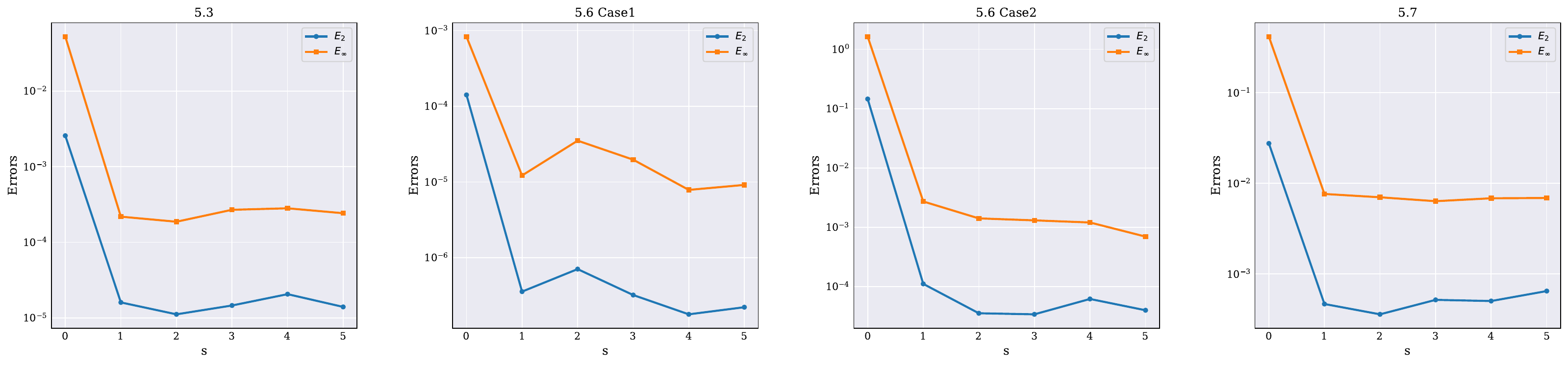}
	\vspace{-0.85cm}
	\caption{Error evolution curves under neuron resampling for different test cases.}
	\label{fig:5.8}
	\vspace{-0.6cm}
\end{figure}

\section{Conclusions} \label{sec:sum}

In this work, we propose a novel machine learning framework, adaptive transferable neural network (A-TransNet), for solving partial differential equations (PDEs) with anisotropic or sharp solution features while achieving high numerical accuracy without additional computational overhead. By leveraging the gradients of the approximate solutions generated by TransNet as adaptation indicators, A-TransNet integrates the active subspaces method and local gradient analysis to adaptively determine the locations of additional neural basis functions. This strategy enables the added feature hyperplanes to align with dominant gradient directions in regions with large solution variations, thereby effectively capturing anisotropic and sharp solution behaviors. Furthermore, the shape parameters of the added neural basis functions are directly determined from the neuron density, allowing flexible adaptation to nonuniform neuron distributions without extra computational costs and enhancing the network representation capability. The adaptive expansion process terminates according to the proposed convergence criterion and a posteriori error estimator, ensuring controllable accuracy improvement. Benefiting from its collocation-based formulation, A-TransNet can be readily applied to a broad range of nonlinear PDEs and problems involving complex geometries. Extensive numerical experiments validate the theoretical analysis and demonstrate that A-TransNet achieves superior accuracy and efficiency compared with state-of-the-art methods for PDEs with anisotropic or sharp solutions. By enabling neuron-level adaptivity, A-TransNet provides an effective and convergent framework for addressing PDEs with complex solution characteristics.Future work will focus on extending this framework to generalized transferable neural networks (GTransNet) in \cite{Cheng2026GTransNet}  and exploring its applicability to broader scientific computing problems, including higher-dimensional Maxwell’s equations and computational fluid dynamics with complex geometries.

\vspace{1em}
\noindent\textbf{CRediT authorship contribution statement}

Jiacong Qu: Writing -- review \& editing, Writing -- original draft, Software, Methodology, Investigation, Visualization. Liyong Zhu: Writing -- review \& editing, Supervision, Methodology, Conceptualization, Formal analysis, Funding acquisition, Resources, Writing -- original draft.

\vspace{1em}
\noindent\textbf{Declaration of competing interest}

The authors declare that they have no known competing financial interests or personal relationships that could have appeared to influence the work reported in this paper.

\vspace{1em}
\noindent\textbf{Acknowledgement}

L. Zhu’s work was partially supported by Science Challenge Project under grant number TZ2026004 and National Natural Science Foundation of China under grant number 12671514.

\appendix
\section{Proof of Theorem~\ref{neuron density consistency}}\label{app:proofthm2}

\begin{proof}[Proof of Theorem~\ref{neuron density consistency}]
	Since the location parameters $\{\boldsymbol{a}_m\}_{m=1}^M$ and $\{r_m\}_{m=1}^M$ are sampled as
	\begin{align*}
		\boldsymbol{a}_m \stackrel{\text{i.i.d.}}{\sim} U(S^{d-1}), \quad
		r_m \stackrel{\text{i.i.d.}}{\sim} U[-R, R],
	\end{align*}
	and undergo the coordinate transformation given in \eqref{coordinate transformation}, with sufficiently large $M$ and sufficiently small $\delta$, their respective probability densities after transformation are
	\begin{align*}
		p(\boldsymbol{\theta}_{m}) = \frac{1}{\pi^{d-1}}, \quad
		p(l_m) = \frac{1}{2R}.
	\end{align*}
	As $\boldsymbol{a}_m$ and $r_m$ are mutually independent, the transformed variables $\boldsymbol{\theta}_m$ and $l_m$ remain independent, leading to the joint probability density
	\begin{align*}
		p(\boldsymbol{\theta}_m, l_m) = p(\boldsymbol{\theta}) \cdot p(l_m) = \frac{1}{2\pi^{d-1} R}.
	\end{align*}
	
	Consider a small rectangular neighborhood $\prod_{i=1}^{d-1} \left[\theta_{mi}-\frac{\pi \delta}{2R}, \theta_{mi} + \frac{\pi \delta}{2R}\right] \times [l_m-\delta, l_m +\delta]$, where the parameter $\delta$ determines the size of the neighborhood, and whose volume is
	\begin{align*}
		\text{Vol}(\delta) = \left(\frac{\pi \delta}{R}\right)^{d-1} \cdot 2\delta = \frac{2\pi^{d-1} \delta^d}{R^{d-1}}.
	\end{align*}
	From this, the probability that a single neuron falls into this neighborhood can be computed as
	\begin{align*}
		\mathbb{P}(\delta) = p(\theta, l) \cdot \text{Vol}(\delta) = \frac{1}{2\pi^{d-1} R} \cdot \frac{2\pi^{d-1} \delta^d}{R^{d-1}} = \frac{\delta^d}{ R^d}.
	\end{align*}
	Applying the linearity of expectation, the expected number of neurons in the neighborhood is given by
	\begin{align*}
		\mathbb{E}[N_m] = M \cdot \mathbb{P}(\delta) = \frac{M \delta^d}{R^d}.
	\end{align*}
	
	Substituting $\mathbb{E}[N_m]$ into the definition of $\rho_m$, we find that
	\begin{align*}
		\mathbb{E}[\rho_m] = \frac{\mathbb{E}[N_m] R^{d-1}}{2\pi^{d-1} \delta^d}
		= \frac{\frac{M \delta^d}{R^d} \cdot R^{d-1}}{2\pi^{d-1} \delta^d}
		= \frac{M}{2\pi^{d-1} R},
	\end{align*}
	which gives the exact expectation of the neuron density. For large $M$ and small $\delta$, this implies the asymptotic consistency of the sample neuron density.
	
	Lastly, taking the expectation of the generation formula \eqref{shape parameters formula} gives
	\begin{align*}
		\mathbb{E}[\gamma_m] \approx C \frac{\mathbb{E}[N_m]^{1/d}}{\delta} = C \frac{M^{1/d}}{R}.
	\end{align*}
	This completes the proof.
\end{proof}

\vspace{1em}
\noindent\textbf{Data availability}

Data will be made available on request.

\end{document}